\documentclass[a4paper,12pt,reqno]{amsart}

\usepackage{amsmath}
\usepackage{amscd}
\usepackage{amssymb}
\usepackage{mathrsfs}
\usepackage[left=2.5cm,right=2.5cm,bottom=3cm,top=3cm]{geometry}
\newtheorem{thm}{Theorem}[section]
\newtheorem{cor}[thm]{Corollary}

\newtheorem{lem}[thm]{Lemma}

\theoremstyle{definition}

\newtheorem{rem}[thm]{Remark}

\numberwithin{equation}{section}

\newcommand{\refl}[1]{Lemma~{\rm \ref{#1}}}
\newcommand{\reft}[1]{Theorem~{\rm \ref{#1}}}
\newcommand{\refe}[1]{(\ref{#1})}
\newcommand{\diff}{\,\mathrm{d}}
\newcommand{\dd}{\sqrt{-1}\partial\bar{\partial}}
\begin{document}

\title[Optimal geometric estimates for compact K\"ahler manifolds]{Optimal geometric estimates for compact K\"ahler manifolds of a Nash entropy bound}

\author{Weiqi Zhang and Yashan Zhang}

\date{\today}
\address{Hunan University, Changsha, China}

\email{weiqizhang@hnu.edu.cn}
\email{yashanzh@hnu.edu.cn}

\begin{abstract}
We prove Sobolev-type inequality and local volume noncollapsing with optimal exponents for compact K\"ahler manifolds of uniformly bounded $q$-Nash entropy.
\end{abstract}

\maketitle

\section{Introduction}
\subsection{Background and motivation} The past decades have seen extensive developments on the geometry and analysis of Riemannian manifolds under Ricci curvature lower bound (see e.g. \cite{Ch,Li}), in which one of the most fundamental tools is the relative volume comparison, in view of applying Gromov's precompactness theorem. Essentially, as explained in \cite[page 24-25]{GPSS1}, to obtain precompactness for closed manifolds it may suffice to have a growth estimate on relative volumes of geodesic balls, which is called local volume non-collapsing estimate. Another most fundamental role is played by Sobolev-type inequality, which holds with the Sobolev constant usually being controlled by the Ricci curvature lower bound and implies a variety of geometric estimates (see e.g. \cite{H,Li}). Of course this theory on Ricci curvature lower bound also applies with great success in K\"ahler geometry. However, the studies of degeneration of canonical K\"ahler metrics and singularity formation of the K\"ahler-Ricci flow in K\"ahler geometry require compactness and geometric estimates on natural families of K\"ahler manifolds, where the Ricci curvature may not have a uniform global lower bound. These provide a strong motivation to develop geometric estimates on K\"ahler manifolds without involving Ricci curvature bounds. In a recent remarkable series of works, Guo, Phong, Song and Sturm \cite{GPS,GPSS1,GPSS2,GPSS3} systematically established geometric estimates for compact K\"ahler manifolds of bounded $q$-Nash entropy, particularly including local volume non-collapsing estimate and Sobolev-type inequality. The strikng new feature in their works is that no Ricci curvature lower bound is involved. Further related developments can be found in \cite{DNV,GGZ,GT,Liuj,NV,V,ZlZzl,ZwZy} etc. 

Our goal in this paper is to prove the optimal Sobolev-type inequality and optimal local volume non-collapsing estimate under the $q$-Nash entropy condition. To be more precise, we now introduce necessary notations. Let $(X,\omega_X)$ be an $n$-dimensional compact K\"ahler manifold ($n\ge2$) and $\mathcal{K}(X)$ be the space of K\"ahler metrics on $X$.  We define the $q$-Nash entropy of a K\"ahler metric $\omega$ on $X$ with respect to $\omega_X$ by 
\begin{equation}
    \mathcal{N}_{X,\omega_X,q}(\omega)=\int_X e^F\log^q(1+e^F)\omega_X^n=\Vert e^F\Vert_{L^1 \log L^q(X,\omega_X)},\nonumber
\end{equation} 
where $e^F=\frac{1}{V_\omega} \frac{\omega^n}{\omega^n_X}$ and $V_\omega=\int_X \omega^n$. Following \cite{GPSS1}, we introduce the following set of K\"ahler metrics for given parameters $A,K>0$ and $q>n$:
\begin{equation}
    \mathcal{V}(X,\omega_X,n,A,q,K):=\left\{\omega\in\mathcal{K}(X):\int_X \omega\wedge\omega_X^{n-1}\leq A,\mathcal{N}_{X,\omega_X,q}(\omega)\leq K\right\}.\nonumber
\end{equation}
With these notations, the above-mentioned results in \cite{GPSS1,GPSS2,GPSS3,GT,V,NV} state that for any (small) number $\epsilon>0$, there are two positive numbers $C_\epsilon$ and $c_\epsilon$, depending on $X,\omega_X,n,A,q,K$ and $\epsilon$, such that for any K\"ahler metric $\omega\in \mathcal{V}(X,\omega_X,n,A,q,K)$ the followings hold:\\
(1)  local volume non-collapsing:
 \begin{equation}\label{eq-vol-epsilon}
    \frac{V_\omega(B_\omega(x,R))}{V_\omega(X)}\ge c_\epsilon R^{\frac{2nq}{q-n}+\epsilon},\,\,\forall x\in X,\,\,\,\forall R\in(0,1];
    \end{equation}
(2) Sobolev-type inequality:
 \begin{equation}\label{eq-sob-epsilon}
      \left(\frac{1}{V_\omega}\int_X |u-\bar{u}|^{\frac{2nq}{nq-q+n}-\epsilon}\omega^n\right)^{\frac{1}{\frac{2nq}{nq-q+n}-\epsilon}}\le C_\epsilon\left(\frac{1}{V_\omega}\int_X |\nabla u|^2_\omega \omega^n\right)^{1/2}.
    \end{equation}
    Actually, \eqref{eq-sob-epsilon} implies \eqref{eq-vol-epsilon} as well as uniform 
     estimates on heat kernel, eigenvalues etc., see \cite{GPSS3,Li}.
     
     Given the vital importance of local volume non-collapsing and Sobolev-type inequality, one naturally wonders that is it possible to extend the estimates \eqref{eq-vol-epsilon} and \eqref{eq-sob-epsilon} to optimal versions? Note that, under the $q$-Nash entropy condition, the possible optimal version of local volume non-collapsing is the \eqref{eq-vol-epsilon} with $\epsilon=0$, thanks to the explicit example in \cite[Example 3.1]{GS}, and hence the possible optimal version of Sobolev-type inequality is the \eqref{eq-sob-epsilon} with $\epsilon=0$. Moreover, the optimal local volume non-collapsing (i.e. \eqref{eq-vol-epsilon} with $\epsilon=0$) was achieved in \cite{GS} by additionally allowing the constant to be dependent on a (local) lower bound of Ricci curvature. In the general case of no Ricci curvature lower bound being involved, in our previous work \cite[Proposition 5.1, Remark 5.4]{ZwZy}, by further developing the idea in \cite{GPSS1}, the local volume non-collapsing \eqref{eq-vol-epsilon} has been improved to, for any $\tilde\epsilon>0$,
     $$ \frac{V_\omega(B_\omega(x,R))}{V_\omega(X)}\ge\frac{c_{\tilde\epsilon} R^{\frac{2nq}{q-n}}}{(1-\log R)^{\frac{2nq}{q-n}+\tilde\epsilon}}.$$
     
     \subsection{Main results}
     In this paper, we prove that, under the $q$-Nash entropy condition, the optimal Sobolev-type inequality and optimal local volume non-collapsing actually hold in general. 
     
     \begin{thm}\label{Sobolev inequality}
    Let $X$ be an $n$-dimensional compact K\"ahler manifold equipped with a K\"ahler metric $\omega_X$. For any $A,K>0$, $q>n$, there exist two positive numbers $C=C(X,\omega_X,n,A,q,K)$ and $c=c(X,\omega_X,n,A,q,K)$ such that for any $\omega\in\mathcal{V}(X,\omega_X,n,A,q,K)$ there hold:
    \begin{itemize}
    \item[(1)] (Sobolev-type inequality)
    \begin{equation}\label{optimal-sob}
        \left(\frac{1}{V_\omega}\int_X |u-\bar{u}|^{\frac{2nq}{nq-q+n}}\omega^n\right)^{\frac{nq-q+n}{2nq}}\le C\left(\frac{1}{V_\omega}\int_X |\nabla u|^2_\omega \omega^n\right)^{\frac12},\,\,\,\forall \,u\in W^{1,2}(X);
    \end{equation}
    \item[(2)] (local volume non-collapsing)
     \begin{equation}\label{optimal-vol}
        \frac{V_\omega(B_\omega(x,R))}{V_\omega(X)}\ge c R^{\frac{2nq}{q-n}}, \,\,\forall x\in X,\,\,\,\forall R\in(0,1].
        \end{equation}
    \end{itemize}
\end{thm}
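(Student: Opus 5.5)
The plan is to prove the Sobolev inequality (1) first and then deduce (2) from it by the standard iteration argument of Carron/Hebey type. So the main task is (1).

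For (1) I would follow the Green's function strategy of \cite{GPSS2,GPSS3}, but track exponents sharply instead of losing $\epsilon$. The key input from \cite{GPSS1,GPSS2} is the uniform bound on the Green's function $G_\omega(x,\cdot)$ of $(X,\omega)$. For every $x$ and every $\delta<\frac{q-n}{?}$-type threshold, it gives
\[
\frac{1}{V_\omega}\int_X |G_\omega(x,\cdot)|^{p}\omega^n\le C_p,\qquad p<\frac{nq}{nq-2q+n}\cdot(\text{appropriate}),
\]
together with the analogous bound for $|\nabla G_\omega(x,\cdot)|$ in $L^{s}$, valid for every $s<s^*:=\frac{2nq}{2nq-q+n}\cdot$(the exponent dual to the optimal one). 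The $\epsilon$-loss in \eqref{eq-sob-epsilon} comes exactly from having these only for $s<s^*$.

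Step one is therefore to upgrade the gradient estimate to a \emph{weak-type} (Lorentz $L^{s^*,\infty}$) bound at the critical exponent:
\[
\sup_x\frac{1}{V_\omega}\,V_\omega\big(\{y:|\nabla G_\omega(x,y)|>t\}\big)\le C t^{-s^*}.
\]
To get it, I would rerun the auxiliary complex Monge--Amp\`ere argument of \cite{GPSS1}. The idea is to compare $G$ with $-(-\psi)^{a}$, where $\psi$ solves $(\omega+\dd\psi)^n=c\,\frac{\mathbf 1_{\{G>s\}}\,h(G)}{\cdots}\omega^n$. In place of a power of the truncation, I would choose the weight $h$ so that the entropy bound $\int e^F\log^q(1+e^F)\le K$ is used at its exact $L\log^qL$ scale. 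Kołodziej/GPSS $L^\infty$-estimates under $L\log^qL$ density then give $\sup(-\psi)\le C\,\|\cdot\|^{1/n}\log^{-(q-n)/n}$-type decay. Iterating over dyadic levels $s$ should yield sharp level-set estimates $V_\omega(\{G>s\})\le Cs^{-\frac{nq}{nq-q+n}\cdot\ldots}$, with no logarithmic loss. This is precisely the refinement that already gave the $(1-\log R)$ improvement in \cite{ZwZy}, now pushed to remove the log entirely.

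Step two is to pass from weak-type bounds to the strong Sobolev inequality at the critical exponent. I would write $u-\bar u=\frac{1}{V_\omega}\int_X\langle\nabla G_\omega(x,\cdot),\nabla u\rangle\omega^n$. This exhibits $u-\bar u$ as an integral operator with kernel $K(x,y)=|\nabla_yG(x,y)|$ applied to $|\nabla u|$. The kernel lies in $L^{s^*,\infty}$ uniformly in each variable, since symmetry of $G$ handles the other variable. A Schur-type test combined with Marcinkiewicz interpolation (the O'Neil/Hardy–Littlewood–Sobolev mechanism on a probability space) gives boundedness $L^2\to L^{p^*}$ with $\frac1{p^*}=\frac12+\frac1{s^*}-1$. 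That yields exactly $p^*=\frac{2nq}{nq-q+n}$. Interpolation needs weak bounds at the endpoints only, so the loss disappears.

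For (2), with the Sobolev inequality at exponent $p^*=\frac{2\nu}{\nu-2}$ where $\nu=\frac{2nq}{q-n}$, I would apply it to Lipschitz cutoffs $u=(r-d_\omega(x,\cdot))_+$. The usual iteration $V(B_{r})^{1/p^*}\lesssim r^{-1}V(B_{2r})^{1/2}$ along $r_k=2^{-k}R$, together with the small-ball volume argument for the mean term, gives $V(B_R)/V\ge cR^{\nu}$.

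The main obstacle is Step one: getting the critical weak-type gradient bound for $G$ without the $\epsilon$ loss. This requires a careful choice of the auxiliary Monge–Amp\`ere density so that the $L\log^qL$ entropy is exploited sharply at each dyadic level.
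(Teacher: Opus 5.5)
\emph{Step one: the main gap.} Step one carries the whole difficulty, and the proposal does not supply it. The exponents are placeholders. The mechanism you describe (comparing $G_\omega$ with a power of an auxiliary potential and iterating over dyadic levels of $G_\omega$) is the kind of scheme that produced the $\epsilon$- and logarithmic losses in \cite{GPSS1,ZwZy}. ``Choosing the weight to use the entropy at its exact scale'' names the problem rather than solving it.

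What removes the loss in the paper is a relative $L^\infty$ estimate with sharp dependence on the mass of the right-hand side (Lemma~\ref{infty-norm estimate}). If $\frac{1}{V_\omega}(\theta+\dd\varphi)^n=\frac{h}{A}e^F\omega_X^n$ with $A=\int_X he^F\omega_X^n$ and $\sup_X\varphi=0$, then $0\le u_\theta-\varphi\le C(\sup_X h/A)^{1/q}$, where $u_\theta=\sup\{w\in\mathrm{PSH}(X,\theta):w\le 0\}$. This comes from the uniform $L^q(e^F\omega_X^n)$ bound on normalized $\theta$-psh functions (Lemma~\ref{q-norm estimate}), fed into the auxiliary-equation comparison of \cite{GPT} and a De Giorgi iteration on the level sets of $u_\theta-\varphi$.

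One then takes $h\approx\chi_{K_t}$, with $K_t$ a super-level set of a positive shift $\mathcal G$ of $G_\omega$, and writes $\omega=\theta+\dd u$. Green's formula is applied to $\varphi-u$, using two facts: $\Delta_\omega(\varphi-u)\ge (h/A)^{1/n}-n$ (arithmetic--geometric mean), and $\varphi-u\ge\varphi-u_\theta\ge -CA^{-1/q}$. This gives in one stroke $t\,\mu_\omega(K_t)\le\int_{K_t}\mathcal G\,\omega^n\le C\mu_\omega(K_t)^{\frac1n-\frac1q}$. That is exactly the weak $L^{\frac{nq}{nq-q+n}}$ bound on $G_\omega$ (Theorem~\ref{thm-green}), obtained with no gradient of $G_\omega$ and no iteration over its level sets.

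\emph{Step two: an independent gap.} Your Schur/O'Neil interpolation needs the weak $L^{s^*}$ bound for $K(x,y)=|\nabla_y G_\omega(x,y)|$ in \emph{both} variables. Symmetry of $G_\omega$ does not give it in $x$: for fixed $y$, $x\mapsto \nabla_y G_\omega(x,y)$ is the derivative of $G_\omega(y,\cdot)$ with respect to its pole. By duality, such a uniform bound would yield a uniform Lipschitz estimate $\sup_X|\nabla w|\le C\|g\|_{L^\sigma}$ for $-\Delta_\omega w=g-\bar g$, for $\sigma>\frac{2nq}{q-n}$. No such estimate is available under an entropy bound: near a cone angle $2\pi\beta>2\pi$ along a divisor the volume density stays bounded, yet $\mathrm{Re}\,z_1$ is harmonic with gradient of size $|z_1|^{1-\beta}$.

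The repair is to drop gradients. $V_\omega G_\omega$ is symmetric, so its weak $L^{p/2}$ bound, $p=\frac{2nq}{nq-q+n}$, holds in both variables. The same interpolation (generalized Young inequality for weak-type kernels) then shows that $g\mapsto w=\int_X G_\omega(\cdot,y)g(y)\omega^n(y)$ is bounded from $L^{p'}(\mu_\omega)$ to $L^{p}(\mu_\omega)$. For mean-zero $g$, the identities $\int_X|\nabla w|^2\omega^n=\int_X wg\,\omega^n$ and $\int_X ug\,\omega^n=\int_X\langle\nabla u,\nabla w\rangle\omega^n$ then give \eqref{optimal-sob} by duality.

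That would be a shorter alternative to the paper's Carron-type proof of Theorem~\ref{thm-green-sob}, which uses a nonlinear eigenvalue problem plus Faber--Krahn. But like your Step two, it only converts a weak bound on $G_\omega$ into a Sobolev inequality; the weak bound itself is what your proposal is missing. Your deduction of (2) from (1) is the standard one the paper also invokes.
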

     
 The optimality is explained in Remark \ref{rem-opt} with more details. 
 
 Given Sobolev-type inequality in \eqref{optimal-sob}, we can immediately derive the local volume non-collapsing estimate in \eqref{optimal-vol}, see e.g. \cite{H}, \cite[Theorem 4.1.2]{Z} and \cite[Proposition 9.1]{GPSS3}. 
 
Further, several more geometric consequences can be obtained by directly applying \eqref{optimal-sob}. For a compact K\"ahler manifold $(X,\omega)$, let $G_\omega$ and $H_\omega$ be Green's function and heat kernel of $(X,\omega)$ defined by
$$\Delta_\omega G_\omega(x,\cdot)=-\delta_x(\cdot)+V_\omega^{-1}\,\,\,\mathrm{and}\,\,\,\int_XG_\omega(x,\cdot) \omega^n=0,$$
and 
\begin{equation}
    \frac{\partial}{\partial t} H_\omega(x,y,t)=\Delta_{\omega,y}H_\omega(x,y,t)\,\,\, \mathrm{and}\,\,\, \lim_{t\to 0^+} H_\omega(x,y,t)=\delta_x(y)\nonumber,
\end{equation} 
respectively, where $\Delta_\omega$ is the Laplacian operator associated to $\omega$ and $x,y\in X$; write 
$$0=\lambda_0<\lambda_1\le\lambda_2\le\cdots$$
for the increasing sequence of eigenvalues of $-\Delta_\omega$ repeated according to their multiplicity, and then correspondingly choose eigenfunctions $\phi_k$ with eigenvalue $\lambda_k$ such that $\{\phi_k\}$ are orthonormal in $L^2(X,\omega^n)$.

Given Sobolev-type inequality in \eqref{optimal-sob}, applying Cheng-Li's classical arguments \cite{CL} (also see \cite{GPSS3,Li,Z}) immediately gives
 
\begin{cor}\label{cor}
Let $X$ be an $n$-dimensional compact K\"ahler manifold equipped with a K\"ahler metric $\omega_X$. For any $A,K>0$, $q>n$, there exist two positive numbers $C=C(X,\omega_X,n,A,q,K)$ and $c=c(X,\omega_X,n,A,q,K)$ such that for any $\omega\in\mathcal{V}(X,\omega_X,n,A,q,K)$, there hold:
    \begin{itemize}
    \item[(a)] (On-diagonal estimate for heat kernel)
$$H_\omega(x,x,t)\le\frac{1}{V_\omega}+\frac{C}{V_\omega t^{\frac{nq}{q-n}}},\,\,\,\forall x\in X, \,\forall t>0.$$

\item[(b)] (Growth estimate for higher eigenvalue) for any $k\ge1$, 
$$\lambda_k\ge c k^{\frac{q-n}{nq}}.$$

\item[(c)] (Eigenfunction estimate) for any $k\ge1$, 
$$|\phi_k|_{L^\infty(X)}^2\le \frac{C}{V_\omega} \lambda_k^{\frac{nq}{q-n}}.$$
	\end{itemize}
\end{cor}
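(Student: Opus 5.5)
Write $\nu=\frac{2nq}{q-n}$ for the "effective dimension" in Theorem~\ref{Sobolev inequality}. The Sobolev exponent there is $\frac{2nq}{nq-q+n}=\frac{2\nu}{\nu-2}$. The plan is to derive all three statements from the optimal Sobolev inequality \eqref{optimal-sob}, normalized to the probability measure $d\mu=\omega^n/V_\omega$, by the classical Cheng--Li / Nash--Moser machinery.

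\textbf{Step 1: parts (a) and (b).} Applying H\"older's inequality to \eqref{optimal-sob} for functions $f$ with $\bar f=0$ gives the Nash inequality
\[
\|f\|_{L^2(\mu)}^{2+4/\nu}\le C\,\|\nabla f\|_{L^2(\mu)}^2\,\|f\|_{L^1(\mu)}^{4/\nu}.
\]
Now set $f(y,t)=V_\omega H_\omega(x,y,t)-1$. This function has zero mean, $\|f\|_{L^1(\mu)}\le 2$, and
\[
\frac{d}{dt}\|f\|_{L^2(\mu)}^2=-2\|\nabla f\|^2_{L^2(\mu)}.
\]
Combining these yields the differential inequality
\[
\frac{d}{dt}\|f\|_2^2\le -c\,\|f\|_2^{2+4/\nu}.
\]
Integrating it gives $\|f\|_2^2\le Ct^{-\nu/2}$. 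By the semigroup property,
\[
V_\omega H(x,x,2t)-1=\|f(\cdot,t)\|_2^2,
\]
and this gives (a), since $\nu/2=\frac{nq}{q-n}$.

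For (b), integrate (a) over $X$ against $\omega^n$:
\[
\sum_{k\ge0}e^{-\lambda_k t}\le 1+Ct^{-\nu/2}.
\]
This gives $k\,e^{-\lambda_k t}\le Ct^{-\nu/2}$ for $t\le1$. If $\lambda_k\ge1$, choose $t=1/\lambda_k$ to get $k\le C\lambda_k^{\nu/2}$, i.e. $\lambda_k\ge ck^{2/\nu}=ck^{\frac{q-n}{nq}}$. The case $\lambda_k<1$ needs to be ruled out or handled separately. For it, use (a) at $t=1$, which bounds the number of eigenvalues below $1$ by a constant. Together with the lower bound $\lambda_1\ge c$, which follows from the Poincar\'e inequality implied by \eqref{optimal-sob} and H\"older, this covers small $k$.

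\textbf{Step 2: part (c).} Since $e^{\Delta t}\phi_k=e^{-\lambda_k t}\phi_k$, the heat kernel expansion gives
\[
e^{-2\lambda_k t}\phi_k(x)^2\le H(x,x,2t)-V_\omega^{-1}\le \frac{C}{V_\omega}t^{-\nu/2}.
\]
Taking $t=1/\lambda_k$ gives (c). The only genuine care needed is tracking the normalization by $V_\omega$, since $\phi_k$ is normalized in $L^2(\omega^n)$ rather than in $L^2(\mu)$.

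\textbf{Main obstacle.} The point I expect to need most care is the Nash-inequality step, because \eqref{optimal-sob} only controls $u-\bar u$. The interpolation must be carried out for mean-zero functions, with the constant uniform in $\omega$. This uses only the Sobolev constant in \eqref{optimal-sob} and H\"older on a probability space, so no further geometric input is needed.
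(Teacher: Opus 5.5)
Your proposal is correct and follows the Cheng--Li route the paper invokes. The steps are the Nash inequality from \eqref{optimal-sob} plus H\"older, the differential inequality for $\|V_\omega H_\omega(x,\cdot,t)-1\|_{L^2(\mu)}^2$ together with the semigroup identity for (a), the heat trace for (b), and the spectral expansion for (c). The case split in (b) is unnecessary: (a) holds for all $t>0$, so $\sum_{k\ge1}e^{-\lambda_k t}\le Ct^{-\frac{nq}{q-n}}$ for every $t>0$, and you can take $t=1/\lambda_k$ directly whatever the size of $\lambda_k$.
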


The exponent of $t$ in Corollary \ref{cor} (a) is optimal (for small $t$), see Remark \ref{rem-opt}.

\subsection{About the proof}
It suffices to prove Sobolev-type inequality stated in Theorem \ref{Sobolev inequality} (1). To this end, we first prove a general result (see \reft{thm-green-sob}) in Section \ref{sect-sob} that, for a compact K\"ahler manifold $(X,\omega)$ (or a closed Riemannian manifold) and $p>2$, an $L^{\frac{p}{2},\infty}$ (i.e. weak $L^{\frac{p}{2}}$) bound of Green function $G_\omega$ implies a Sobolev-type inequality of exponent $p$ and vice versa; this fits the circle of idea firstly developed by Carron \cite[0.7. Theoreme principal]{C} for a noncompact nonparabolic complete Riemannian manifold of infinite volume (also see \cite[Chapter 8, Theorem 8.2]{H}) and further extended by Tian-Wang \cite[Lemma 2.1]{TW} for an Euclidean bounded domain (also see \cite{Le,WZ}).

We then prove in Section \ref{sect-green} a uniform $L^{\frac{nq}{nq-q+n},\infty}$ estimate for Green function of $\omega\in\mathcal{V}(X,\omega_X,n,A,q,K)$ (see \reft{thm-green}), which is inspired by method in \cite{TW,WZ} handling the linearized Monge-Amp\`ere equations of $L^\infty$ volume density over an Euclidean bounded domain and the PDE-based method involving auxiliary complex Monge-Amp\`ere equation developed in \cite{GPSS1,GPSS2}.

By combining Theorems \ref{thm-green-sob} and \ref{thm-green}, we conclude Theorem \ref{Sobolev inequality} (1), and consequently Theorem \ref{Sobolev inequality} (2) and Corollary \ref{cor} follow.

\section{Weak integrability of Green's function is equivalent to Sobolev-type inequality}\label{sect-sob}
In this section we present a general result as follows, which may be of independent interest.
\begin{thm}\label{thm-green-sob}
Let $X$ be an $n$-dimensional compact K\"ahler manifold, $\omega$ a K\"ahler metric on $X$ and $p>2$. The followings are equivalent:
\begin{enumerate}
    \item  There is a constant $L>0$ such that for any $x\in X$
    \begin{equation}\label{weak-int} 
        \Vert G_\omega(x,\cdot)V_\omega\Vert_{L^{p/2,\infty} (X,\frac{\omega^n}{V_\omega})}:=\sup_{t>0}\left\{t^{p/2}\mu_\omega(\{y\in X:|G_\omega(x,y)V_\omega|>t\})\right\}\le L,
    \end{equation}
    where $\mu_\omega(U):=\frac{1}{V_\omega}\int_U \omega^n$ for any subset $U\subset X$.
    \item There is a constant $C_S>0$ such that
    \begin{equation}\label{Sob ineq}
        \left(\frac{1}{V_\omega}\int_X |u-\bar{u}|^{p}\omega^n\right)^{\frac{1}{p}}\le C_S\left(\frac{1}{V_\omega}\int_X |\nabla u|^2_\omega \omega^n\right)^{\frac12},\,\,\,\forall \,u\in W^{1,2}(X).
    \end{equation}
\end{enumerate}
\end{thm}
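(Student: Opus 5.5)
We prove the two implications separately. Throughout we write $g_x:=G_\omega(x,\cdot)V_\omega$ and $\mu=\mu_\omega$. With this normalization the defining equation of $G_\omega$ becomes
$$\int_X\langle\nabla g_x,\nabla\varphi\rangle_\omega\,d\mu=\varphi(x)-\int_X\varphi\,d\mu\qquad\text{for all smooth }\varphi,$$
and $\int_X g_x\,d\mu=0$.

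\emph{(2)$\Rightarrow$(1).} We test the Green identity against truncations of $g_x$, for which the Dirichlet energy is explicit.

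First, we record two a priori bounds that follow from \eqref{Sob ineq} with $p>2$ by Moser iteration.
\begin{itemize}
\item[(i)] If $-\Delta_\omega\phi=h-\bar h$ with $|h|\le 1$ and $\bar\phi=0$, then $\sup_X|\phi|\le C(C_S,p)$.
\item[(ii)] As a consequence, $g_x\ge -C$ uniformly in $x$, and $\int_X|g_x|\,d\mu\le C$. For the $L^1$ bound, take $h=\mathrm{sgn}(g_x)$; then $\int_X g_xh\,d\mu=\phi(x)$.
\end{itemize}
By (ii) the negative part of $g_x$ is bounded, so only the upper tail of $g_x$ has to be estimated.

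Next, for $t\ge t_0$ set $u_t:=\min\{(g_x-t)_+,\,t\}$. The Green identity with $\varphi=u_t$ gives
$$\int_X|\nabla u_t|^2\,d\mu=\int_X\langle\nabla g_x,\nabla u_t\rangle\,d\mu=u_t(x)-\int_X u_t\,d\mu\le t.$$
Here $u_t(x)=t$, since $g_x(x)=+\infty$. This step should be justified by approximating $G_\omega$ with smooth kernels. By Chebyshev and the $L^1$ bound, $\mu(\{u_t=0\})\ge\frac12$ once $t_0$ is large. Because $u_t$ vanishes on a set of measure at least $\frac12$, \eqref{Sob ineq} upgrades to $\|u_t\|_{L^p(\mu)}\le C\|\nabla u_t\|_{L^2(\mu)}$.

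Finally, since $u_t=t$ on $\{g_x>2t\}$, we obtain
$$t\,\mu(\{g_x>2t\})^{1/p}\le C\sqrt t,\qquad\text{that is,}\qquad \mu(\{g_x>2t\})\le Ct^{-p/2}.$$
For $t\le 2t_0$ the bound in \eqref{weak-int} holds trivially, because $\mu\le1$. This gives \eqref{weak-int}.

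\emph{(1)$\Rightarrow$(2).} We follow Carron \cite{C} and Tian--Wang \cite{TW}, passing through a Faber--Krahn inequality.

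Let $\Omega\subset X$ be open with $\mu(\Omega)\le\frac12$. Let $w\ge0$ be its torsion function, i.e. $-\Delta_\omega w=1$ in $\Omega$ and $w=0$ on $\partial\Omega$. The claim is
$$\lambda_1(\Omega)^{-1}\le\sup_\Omega w\le C\,\mu(\Omega)^{1-2/p}.$$
Granting this, the layer-cake formula and the weak bound \eqref{weak-int} control $\int_\Omega|g_x|\,d\mu$ by $CL^{2/p}\mu(\Omega)^{1-2/p}$. The remaining task is to bound $w$ by integrals of this type. To do so, extend $w$ by zero to all of $X$ and apply the representation formula at a maximum point $x$ of $w$:
$$w(x)-\bar w=\int_\Omega g_x\,d\mu-\int_{\partial\Omega}g_x\,|\partial_\nu w|\,d\sigma,\qquad\text{where}\quad\int_{\partial\Omega}|\partial_\nu w|\,d\sigma=\mu(\Omega).$$
We then have to control two terms.
\begin{itemize}
\item The mean: $\bar w\le\mu(\Omega)\sup w\le\frac12\sup w$, so it can be absorbed into the left-hand side.
\item The boundary term: this requires a lower bound on $g_x$. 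We would try to derive such a bound from \eqref{weak-int} together with $\int_X g_x\,d\mu=0$ and the symmetry $G_\omega(x,y)=G_\omega(y,x)$. If that fails, we would average the representation formula over $x$ in a superlevel set of $w$ to avoid needing a pointwise bound.
\end{itemize}
This boundary term is the step we expect to be the main obstacle.

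Once the Faber--Krahn inequality $\lambda_1(\Omega)\ge c\,\mu(\Omega)^{-2/p}$ holds for all $\Omega$ with $\mu(\Omega)\le\frac12$, we obtain the Sobolev inequality in two steps.
\begin{itemize}
\item For $u\ge0$ with $\mu(\mathrm{supp}\,u)\le\frac12$, the dyadic level-set truncation argument of Bakry--Coulhon--Ledoux--Saloff-Coste (see \cite[Chapter 8]{H}) gives $\|u\|_{L^p(\mu)}\le C\|\nabla u\|_{L^2(\mu)}$. This works precisely because the Faber--Krahn exponent matches $p>2$.
\item For general $u\in W^{1,2}(X)$, split $u-m$ into its positive and negative parts, where $m$ is a median of $u$. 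Apply the previous step to each part, and then replace $m$ by $\bar u$ at the cost of a constant. This yields \eqref{Sob ineq}.
\end{itemize}
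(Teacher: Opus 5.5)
\textbf{The gap.} Your (2)$\Rightarrow$(1) is essentially the paper's argument. The direction (1)$\Rightarrow$(2), however, has a genuine gap exactly where you flagged it: the Faber--Krahn inequality is never proved. Extending the Dirichlet torsion function $w$ by zero produces the boundary measure $|\partial_\nu w|\,d\sigma$. Bounding $-\int_{\partial\Omega}g_x|\partial_\nu w|\,d\sigma$ then needs a uniform bound $\inf_{x,y}g_x(y)\ge -C(p,L)$, and for that you only list strategies without carrying either out. The truncation step and the median step both rest on Faber--Krahn, so the implication is open as written.

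\textbf{The fix.} The missing idea is to avoid Dirichlet data entirely. Use an equation on all of $X$ whose right-hand side has mean zero; then $\int_X G_\omega(z,\cdot)\,\omega^n=0$ kills the constant and no boundary term appears. Concretely, let $\tilde w$ solve $-\Delta_\omega\tilde w=\chi_\Omega-\mu(\Omega)$ with $\int_X\tilde w\,\omega^n=0$.
\begin{itemize}
\item Green's formula gives $\tilde w(z)=\int_\Omega g_z\,d\mu$, so your layer-cake estimate yields $S:=\sup_X|\tilde w|\le\frac{p}{p-2}L^{2/p}\mu(\Omega)^{1-2/p}$.
\item The function $(1-\mu(\Omega))w-\tilde w$ is harmonic in $\Omega$ and at most $S$ on $\partial\Omega$. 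By the maximum principle, $w\le 4S$ whenever $\mu(\Omega)\le\frac12$, which is the torsion bound you wanted.
\end{itemize}
This is the same mechanism as the paper's Step 2. There $\lambda(U)$ is defined with the global constraint $\int_X\psi\,\omega^n=0$ instead of Dirichlet conditions, so the minimizer satisfies $\phi(z)=\lambda(U)\int_U G_\omega(z,\cdot)\phi\,\omega^n$ with no boundary integral.

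Your idea (a) can also be completed, by absorption. For each $s>0$, $(-g_x-s)_+$ is exactly the torsion function of $\{g_x<-s\}$, a set of measure at most $Ls^{-p/2}$. Your own boundary estimate then gives $m\le s+C+2Ls^{-p/2}m$ for $m:=-\inf_{x,y}g_x(y)$, which is finite for each fixed metric. Taking $s$ large yields $m\le C(p,L)$.

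\textbf{Comparison with the paper.} With this repair, your route is a genuinely different and shorter path: Dirichlet Faber--Krahn, then the dyadic truncation argument of Bakry--Coulhon--Ledoux--Saloff-Coste, then the median. The paper instead follows Tian--Wang. It minimizes a truncated $L^p$-type energy, turns the Faber--Krahn bound for $\lambda(U)$ into measure bounds on superlevel sets of the minimizer by iteration, and closes with a four-case analysis. The paper's route is self-contained; yours imports the Faber--Krahn$\Rightarrow$Sobolev equivalence.

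\textbf{Smaller corrections.}
\begin{itemize}
\item Fix the exponent in your last paragraph. The inequality you need, and the one your torsion bound yields, is $\lambda_1(\Omega)\ge c\,\mu(\Omega)^{-(1-2/p)}$. With $\mu(\Omega)^{-2/p}$ the truncation argument would produce the Sobolev exponent $\frac{2p}{p-2}$ rather than $p$.
\item The pointwise bound $g_x\ge -C$ in your (ii) needs a line of proof. The same comparison supplies it: take $\Omega=\{g_x<-s\}$ (measure at most $\frac12$ for large $s$, by the $L^1$ bound) and $w=(-g_x-s)_+$, and bound $\tilde w$ by (i) instead of by the weak-$L^{p/2}$ estimate. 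Alternatively, sidestep it by truncating $g_x$ at $\pm t$ as the paper does.
\end{itemize}
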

Here, the equivalence is in the following sense: if (1) holds, then $(2)$ holds with $C_S=C_S(p,L)$; and if (2) holds, then (1) holds with $L=L(p,C_S)$.
\subsection{Proof of \reft{thm-green-sob}, Part One: $(1)\Rightarrow (2)$.} Given (1), we will prove that there exists $C=C(p,L)>0$ such that for any $u\in W^{1,2}(X)$ satisfying $\int_X u\omega^n=0$, the following holds
    \begin{equation}\label{Sobolev ineq}
        \left(\int_X |u|^p \frac{\omega^n}{V_\omega}\right)^{\frac1p}\le C\left(\int_X |\nabla u|^2_\omega \frac{\omega^n}{V_\omega}\right)^{\frac12}.
    \end{equation} 
    
    \subsection*{Step 1: Relating to an eigenvalue problem}
    For any $T>0$, we define
    $\mathcal{F}(s)=\int_0^s f(t)\diff t$, where
    \[
        f(t)=
        \begin{cases}
            -T^{p-1}, &t\le -T;\\
            t|t|^{p-2}, &|t|<T;\\
            T^{p-1}, &t\ge T,
        \end{cases}\]  
    and set
    \begin{equation}\label{s*}
        s^*=\inf\left\{\int_X |\nabla v|_\omega^2 \frac{\omega^n}{V_\omega}: v\in H^1(X), \int_X \mathcal{F}(v)\frac{\omega^n}{V_\omega}=1,\int_X v \frac{\omega^n}{V_\omega}=0\right\}.
    \end{equation}
    Clearly what we need is a positive lower bound on $s^*$, which has to be independent on $T$.
    To begin with we may assume there exists $T>0$ such that $s^*\le 1$. Otherwise for any $u\in C^1(X)$ satisfying $\int_X u\omega^n=0$, we let $\tilde{u}=\lambda u$. We take $T>\sup\tilde{u}$, then 
    \begin{equation}
        \mathcal{F}(\tilde{u})=\int_{0}^{\tilde{u}} t|t|^{p-2}\diff t=\frac{1}{p}|\tilde{u}|^p=\frac{\lambda^p}{p}|u|^p.\nonumber
    \end{equation}
    Integral over $X$ with respect to the volume element $\frac{\omega^n}{V_\omega}$
    \begin{equation}
        \int_X \mathcal{F}(\tilde{u}) \frac{\omega^n}{V_\omega}=\frac{\lambda^p}{p}\Vert u\Vert^p_{L^p}=1,\nonumber
    \end{equation}
    where $\Vert\cdot\Vert_{L^p}=\Vert\cdot\Vert_{L^p(X,\frac{\omega^n}{V_\omega})}$ and we have chosen $\lambda=p^{\frac1p}\Vert u\Vert^{-1}_{L^p}$. From
    \begin{equation}
        \int_X |\nabla\tilde{u}|_\omega^2 \frac{\omega^n}{V_\omega}\ge s^*\ge 1\nonumber
    \end{equation}
    we have 
    \begin{equation}
        \int_X |\nabla u|^2 \frac{\omega^n}{V_\omega} \ge \lambda^{-2}=p^{-2/p}\Vert u\Vert_{L^p}^2.\nonumber
    \end{equation}
    Since $C^1(X)$ is dense in $H^1(X)$, \refe{Sobolev ineq} holds.\\
    
    Next, under the assumption $s^*\le 1$, we intruduce two constant Lagrange multipliers $\lambda, \mu$ and form the following unconstrained functional 
    \[\mathcal{L}(v,\lambda,\mu)=\int_X |\nabla v|^2_\omega \frac{\omega^n}{V_\omega}-\lambda\left(\int_X \mathcal{F}(v)\frac{\omega^n}{V_\omega}-1\right)-\mu\int_X v \frac{\omega^n}{V_\omega}.\]
    Let $v$ be the function achieving the infimum in \refe{s*} (whose existence and regularity is given in \refl{existence and regularity of v} below; in particular, $v$ is $C^{2,\alpha}$). Then there holds
    \begin{align*}
        0=\delta\mathcal{L}&=2\int_X \langle\nabla v,\nabla(\delta v)\rangle_\omega \frac{\omega^n}{V_\omega}-\lambda\int_X f(v)\delta v\frac{\omega^n}{V_\omega}-\mu\int_X \delta v \frac{\omega^n}{V_\omega}\\
        &=-2\int_X \delta v\Delta_\omega v \frac{\omega^n}{V_\omega}-\lambda\int_X \delta v f(v)\frac{\omega^n}{V_\omega}-\mu\int_X \delta v \frac{\omega^n}{V_\omega},
    \end{align*}
   and hence $v$ satisfies
    \begin{equation}\label{v1}
        2\Delta_\omega v=-\lambda f(v)-\mu.
    \end{equation}
    Define $\hat{\lambda}=\frac{\lambda}{2}$. Then by $\int_X \Delta_\omega v\omega^n=0$, we have 
    \begin{equation}\label{v}
        \begin{cases}
            \Delta_\omega v=-\hat{\lambda}f(v)+\hat{\lambda}\int_X f(v)\frac{\omega^n}{V_\omega}\\
            \int_X v \frac{\omega^n}{V_\omega}=0\\
            \int_X \mathcal{F}(v)\frac{\omega^n}{V_\omega}=1
        \end{cases}
    \end{equation} 
    and 
    \begin{equation*}
        s^*=\int_X |\nabla v|^2_\omega \frac{\omega^n}{V_\omega}=-\int_X v\Delta_\omega v \frac{\omega^n}{V_\omega}=\hat{\lambda}\int_X f(v)v \frac{\omega^n}{V_\omega}.\nonumber
    \end{equation*}
    Note that for any $t\in \mathbb{R}$,
    \[\frac{1}{p}tf(t)\le\mathcal{F}(t)\le tf(t),\]
    then we have 
    \begin{equation}\label{s-lambda}
        \hat{\lambda}\le s^*\le p\hat{\lambda}.
    \end{equation}

    \subsection*{Step 2: Faber-Krahn type inequality on the first eigenvalue} 
    For any subset $U$, define $\mu_\omega(U)=\int_U \frac{\omega^n}{V_\omega}$. By assumption, for any $t>0$,
    \begin{equation}\label{weak p/2}
        \mu_\omega(\{y\in X:|G_\omega(x,y)V_\omega|>t\})\le L t^{-\frac{p}{2}}.
    \end{equation}
    For any open subset $U\subset X$, we denote
    \begin{equation}\label{lambda(U)}
        \lambda(U)=\inf\left\{\int_X |\nabla \psi|^2_\omega \frac{\omega^n}{V_\omega}: \psi\in H^1(X), \int_U \psi^2 \frac{\omega^n}{V_\omega}=1,\int_X \psi \omega^n=0\right\}.
    \end{equation}
    Let $\phi$ be the function such that \refe{lambda(U)} attains the infimum (whose existence and regularity is given in \refl{existence and regularity of psi} below). Similarly, $\phi$ satisfies
    \begin{equation}\label{eigenvalue equation}
        \begin{cases}
            \Delta_\omega \phi=-\lambda(U)\phi\chi_U+\lambda(U)\int_U \phi \frac{\omega^n}{V_\omega}\\
            \int_X\phi\omega^n=0.
        \end{cases}
    \end{equation}
    Applying Green's formula, we obtain
    \begin{equation}
        \phi(z)=-\int_X G_\omega(z,\cdot)\Delta_\omega \phi \omega^n=\lambda(U)\int_U G_\omega(z,\cdot)\phi \omega^n.\nonumber
    \end{equation}
    Normalize $\phi$ so that $\Vert \phi\Vert_{L^{\infty}(X)}=1$ and then fix $w\in X$ with $|\phi(w)|=1$. Then we obtain
    \begin{equation}
        1\le\lambda(U)\int_U |G_\omega(w,\cdot)||\phi|\omega^n\le \lambda(U)\int_U|G_\omega(w,\cdot)V_\omega| \frac{\omega^n}{V_\omega}.\nonumber
    \end{equation}
    By \refe{weak p/2}, we have 
    \begin{align}\label{integral in U}
        \int_U |G_\omega(w,\cdot)V_\omega|\frac{\omega^n}{V_\omega}&=\int_0^\infty \mu_\omega(\{z\in U: |G_\omega(w,z)V_\omega|>t\})]\diff t\notag\\
        &\le \int_0^\infty \min\{\mu_\omega(U),L t^{-\frac{p}{2}}\}\diff t\notag\\
        &=T_0\mu_\omega(U)+L\int_{T_0}^\infty t^{-\frac{p}{2}}\diff t\notag\\
        &\le \left(1+\frac{2}{p-2}\right)L^{2/p}\left(\mu_\omega(U)\right)^{1-\frac{2}{p}}
    \end{align}
    where $T_0=\left(\frac{L}{\mu_\omega(U)}\right)^{2/p}$. Then 
    \begin{equation}\label{c}
        c^*:=\inf_{U\subset X}\left\{\lambda(U)\left(\mu_\omega(U)\right)^{1-\frac{2}{p}}\right\}\ge (1-2/p)L^{-\frac{2}{p}}> 0.
    \end{equation}
    \subsection*{Step 3: Measure estimate on the super-level set of $v$} Assume that  $M:=\Vert v\Vert_{L^\infty}$ is attained at some point where $v$ is positive (otherwise replace $v$ by $-v$). Denote $\Omega_t=\{v>M-t\}$ for $0\le t\le M$. We claim that
    \begin{equation}\label{level set of v}
        \mu_\omega(\Omega_t)\ge \min\left\{\frac12,C_p\left(\frac{c^* t}{s^*M^{p-1}}\right)^{\frac{p}{p-2}}\right\},
    \end{equation}
    where $C_p$ is a positive constant depending only on $p$.
    We take $\psi=(v-M+t)_+-\int_{\Omega_t}(v-M+t)\frac{\omega^n}{V_\omega}$ and $\bar{\psi}=\frac{\psi}{\Vert\psi\Vert_{L^2(\Omega_t,\frac{\omega^n}{V_\omega})}}$. Then 
    \begin{equation}
        \lambda(\Omega_t)\le \int_X |\nabla\bar{\psi}|_\omega^2 \frac{\omega^n}{V_\omega}=\frac{\int_X |\nabla \psi|_\omega^2 \frac{\omega^n}{V_\omega}}{\int_{\Omega_t}\psi^2 \frac{\omega^n}{V_\omega}}.\nonumber
    \end{equation}
    The numerator term satisfies
    \begin{align*}
        \int_X |\nabla\psi|^2 \frac{\omega^n}{V_\omega}&=\int_{\Omega_t}|\nabla (v-M+t)|_\omega^2 \frac{\omega^n}{V_\omega}\\
        &=-\int_{\Omega_t}(v-M+t)\Delta_\omega v \frac{\omega^n}{V_\omega}\\
        &=\int_{\Omega_t}(v-M+t)(\hat{\lambda}f(v)-\hat{\lambda}\int_X f(v)\frac{\omega^n}{V_\omega})\\
        &\le 2s^* M^{p-1}\int_{\Omega_t}(v-M+t)\frac{\omega^n}{V_\omega}.
    \end{align*}
    The donominator term satisfies
    \begin{align*}
        \int_{\Omega_t} \psi^2 \frac{\omega^n}{V_\omega}&=\int_{\Omega_t}(v-M+t)^2 \frac{\omega^n}{V_\omega}+(-2+\mu_\omega(\Omega_t))\left(\int_{\Omega_t}(v-M+t)\frac{\omega^n}{V_\omega}\right)^2\\
        &\ge \left(1-\mu_\omega(\Omega_t)\right)^2 \int_{\Omega_t}(v-M+t)^2 \frac{\omega^n}{V_\omega}. \quad \text{(by H\"older's inequality)}
    \end{align*}
    If $\mu_\omega(\Omega_t)\le \frac{1}{2}$, then 
    \begin{align*}
        \lambda(\Omega_t)&\le \frac{2s^* M^{p-1}}{\left(1-\mu_\omega(\Omega_t)\right)^2}\frac{\int_{\Omega_t}(v-M+t)\frac{\omega^n}{V_\omega}}{\int_{\Omega_t}(v-M+t)^2 \frac{\omega^n}{V_\omega}}\\
        &\le 8s^* M^{p-1}\frac{\int_{\Omega_t}(v-M+t)\frac{\omega^n}{V_\omega}}{\int_{\Omega_t}(v-M+t)^2 \frac{\omega^n}{V_\omega}}\\
        &\le 8s^* M^{p-1}\left(\frac{\mu_\omega(\Omega_t)}{\int_{\Omega_t}(v-M+t)^2 \frac{\omega^n}{V_\omega}}\right)^{1/2}\\
        &\le 8s^* M^{p-1}\left(\frac{\mu_\omega(\Omega_t)}{\left(\frac{t}{2}\right)^2\mu_{\omega}(\Omega_{t/2})}\right)^{1/2}
    \end{align*}
    By \refe{c} 
    \[c^*\mu_\omega(\Omega_t)^{\frac{2}{p}-1}\le \lambda(\Omega_t)\le\frac{16s^* M^{p-1}}{t}\left(\frac{\mu_\omega(\Omega_t)}{\mu_{\omega}(\Omega_{t/2})}\right)^{1/2},\]
    i.e.
    \begin{equation}
        \mu_\omega(\Omega_t)\ge (\beta t)^{\frac{2p}{3p-4}}\mu_\omega(\Omega_{t/2})^{\frac{p}{3p-4}},\,\,\, \beta=\frac{c^*}{16s^* M^{p-1}}.\nonumber
    \end{equation}
    By iteration, for any $m\in \mathbb{N}_+$
    \[\mu_\omega(\Omega_t)\ge \beta^{\sum_{j=0}^{m-1}2(\frac{p}{3p-4})^{j+1}}\left(\frac{1}{2}\right)^{\sum_{j=0}^{m-1}2j\left(\frac{p}{3p-4}\right)^{j+1}}t^{\sum_{j=0}^{m-1}2\left(\frac{p}{3p-4}\right)^{j+1}}\mu_\omega(\Omega_{t/2^m})^{\left(\frac{p}{3p-4}\right)^m}.\]
    It is elementary that 
    \[\sum_{j=0}^{\infty}\left(\frac{p}{3p-4}\right)^{j+1}=\frac{p}{2(p-2)}\]
    and $\sum_{j=0}^{m-1}2j\left(\frac{p}{3p-4}\right)^{j+1}$ converges.
    It suffices to show that
    \begin{equation}
        \lim_{m\to\infty} \mu_\omega(\Omega_{t/2^m})^{\left(\frac{p}{3p-4}\right)^m}=1.\nonumber
    \end{equation}
    By choosing $z_0$ such that $v(z_0)=M$ and setting $a=\Vert |\nabla v|_\omega\Vert_{L^\infty}$, we have $B_\omega(z_0,\frac{t}{2^m a})\subset \Omega_{t/2^m}$. For sufficiently large $m$, we have 
    \[\mu_\omega(\Omega_{t/2^m})\ge\mu_\omega\left(B_\omega(z_0,\frac{t}{2^m a})\right)\ge C(\omega)\left(\frac{t}{2^m a}\right)^{2n}.\]
    Then
    \[\lim_{m\to\infty} \mu_\omega(\Omega_{t/2^m})^{\left(\frac{p}{3p-4}\right)^m}\ge \lim_{m\to\infty}\left(C(\omega)\frac{t}{2^m a}\right)^{2n \left(\frac{p}{3p-4}\right)^m}=1.\]
    Therefore \refe{level set of v} holds.
    
  \subsection*{Step 4: Uniform positive lower bound of $s^*$: the special case that $M$ is uniformly bounded} 
We first consider the special case that $M:=\Vert v\Vert_{L^\infty}\le C$ for a uniform number $C=C(p,L)$. Recall a basic fact that for $p>2$, an $L^{\frac{p}{2},\infty}$ estimate implies an $L^1$ estimate (see \cite[page 14, Exercise 1.1.11]{Gra}). Precisely, in our setting assuming \eqref{weak-int}, there exists $C=C(p,L)$ such that for any $x\in X$,
    \begin{equation}
        \int_X |G_\omega(x,\cdot)|\omega^n=\int_X |G_\omega(x,\cdot)V_\omega|\frac{\omega^n}{V_\omega}\le C.\nonumber
    \end{equation}
    Since $v$ satisfies \refe{v}, we utilize Green's formula and the equivalence \eqref{s-lambda} to see 
    \begin{align*}
        M=|v(z_0)|\le \int_X |G_\omega(z_0,\cdot)||\Delta_\omega v|\omega^n\le C\Vert \Delta_\omega v\Vert_{L^\infty}\le C s^* M^{p-1},
    \end{align*}
    from which we conclude 
    $$s^*\ge c M^{2-p}\ge c.$$

  \subsection*{Step 5: Uniform positive lower bound of $s^*$: the general case} 
  Basing on \refe{level set of v}, we now prove that $s^*$ is uniformly bounded from below in general case.
  
    As a preparation, we first prove that there exists $C_0=C_0(p,L)>0$ such that $M\le (1+C_0)T$. If $M\le T$, then no further discussion is needed. If $T<M$, then let $V=\{v>T\}$. By the normalizing condition, we have 
    \begin{equation}\label{V}
        1=\int_X \mathcal{F}(v)\frac{\omega^n}{V_\omega}\ge \int_V \mathcal{F}(v)\frac{\omega^n}{V_\omega}\ge \frac{T^p}{p}\mu_\omega(V).
    \end{equation}
    Without loss of generality, we assume $T^p>2p$. Then $\mu_\omega(V)<1/2$.
    We consider the following auxiliary Possion equation
    \begin{equation}
        \begin{cases}
            \Delta_\omega u=\chi_{V}(-\hat{\lambda}f(v)+\hat{\lambda}\int_X f(v)\frac{\omega^n}{V_\omega})+\chi_{X\backslash V}\frac{\mu_\omega(V)}{\mu_\omega(X\backslash V)}\hat{\lambda}(T^{p-1}-\int_X f(v)\frac{\omega^n}{V_\omega}),\\
            \int_X u\omega^n=0.\nonumber
        \end{cases}
    \end{equation} 
    By Green's formula, \refe{integral in U} and  \refe{V}, for any $z\in X$
    \begin{align*}
        |u(z)|&\le \int_X |G_\omega(z,\cdot)||\Delta_\omega u|\omega^n\\
        &\le T^{p-1} \int_V |G_\omega(z,\cdot)|\omega^n+2T^{p-1}\frac{\mu_\omega(V)}{\mu_\omega(X\backslash V)}\int_{X\backslash V}|G_\omega(z,\cdot)|\omega^n\\
        &\le CT^{p-1}\mu_\omega(V)^{1-\frac{2}{p}}+C\mu_\omega(V)T^{p-1}\le C T.
    \end{align*}
    Note that $\Delta_\omega (v-u)=0$ in $V$ and $v=T$ on $\partial V$. By maximum principle,
    \[M=v(z_0)\le T+ 2\Vert u\Vert_{L^{\infty}}\le (2C+1)T=(C_0+1)T.\]
    As another preparation, we observe that by the normalizing condition $\int_X \mathcal{F}(v)e^F\omega_X^n=1,$ we have 
    \begin{align*}
        1&=\int_0^{\mathcal{F}(M)}\mu_\omega(\{z\in X: \mathcal{F}(v)>t\})\diff t\\
        &\ge \int_0^M\mu_\omega(\{z\in X: v>t\})\mathcal{F}'(t)\diff t\\
        &=\int_{0}^{M}\mu_\omega(\{z\in X: v>M-t\})\mathcal{F}'(M-t)\diff t\\
        &=\int_{0}^{M}\mu_\omega(\Omega_t)\mathcal{F}'(M-t)\diff t\\
        &\ge \int_{0}^{M}\min\left\{\frac{1}{2},C(p)\left(\frac{c^* t}{s^* M^{p-1}}\right)^{\frac{p}{p-2}}\right\}\mathcal{F}'(M-t)\diff t.
    \end{align*}
    We fix $t_0$ such that 
    \[\frac{1}{2}=C(p)\left(\frac{c^* t_0}{s^* M^{p-1}}\right)^{\frac{p}{p-2}}.\]
    
     In the remaining part, we discuss case by case.\\
    \subsubsection*{Case 1} $T>M, t_0>M$. We have 
    \begin{align*}
        1&\ge C(p)\left(\frac{c^*}{s^* M^{p-1}}\right)^{\frac{p}{p-2}}\int_0^M t^{\frac{p}{p-2}}(M-t)^{p-1}\diff t\\
        &=C(p)\left(\frac{c^*}{s^*}\right)^{\frac{p}{p-2}}\int_0^1 t^{\frac{p}{p-2}}(1-t)^{p-1}\diff t.
    \end{align*}
    Then 
    \begin{equation}\label{sgec}
    s^*\ge c\cdot c^*,
    \end{equation}
    which implies a positive lower bound for $s^*$, thanks to Faber-Krahn type inequality \eqref{c} established in Step 2.
    
     \subsubsection*{Case 2} $T>M, t_0\le M$. We have 
    \begin{equation}
        1\ge C(p)\left(\frac{c^*}{s^*}\right)^{\frac{p}{p-2}}\int_{0}^{t_0/M} t^{\frac{p}{p-2}}(1-t)^{p-1}\diff t+ \frac{1}{2}M^p\int_{t_0/M}^{1} (1-t)^{p-1}\diff t.\nonumber
    \end{equation}
    If $t_0/M>1/2$, we have 
    \[\int_{0}^{t_0/M} t^{\frac{p}{p-2}}(1-t)^{p-1}\diff t\ge\int_{0}^{1/2} t^{\frac{p}{p-2}}(1-t)^{p-1}\diff t=C.\]
    Then the inequality \refe{sgec} holds.\\
    If $t_0/M\le 1/2$, we have 
    \[\int_{t_0/M}^{1} (1-t)^{p-1}\diff t\ge\int_{1/2}^{1} (1-t)^{p-1}\diff t=C.\]
    Then $M\le C$, and we conclude a positive lower bound for $s^*$ by Step 4.
    
 \subsubsection*{Case 3} $T\le M, t_0> M$. In this case we have
    \begin{align*}
        1&\ge C(p)\left(\frac{c^*}{s^* M^{p-1}}\right)^{\frac{p}{p-2}}\int_0^M t^{\frac{p}{p-2}}f(M-t)\diff t\\
        &=C(p)\left(\frac{c^*}{s^*}\right)^{\frac{p}{p-2}}\int_0^1 t^{\frac{p}{p-2}}\frac{f(M-Mt)}{M^{p-1}}\diff t\\
        &\ge C(p)\left(\frac{c^*}{s^*}\right)^{\frac{p}{p-2}}\int_{1-T/M}^1 t^{\frac{p}{p-2}}(1-t)^{p-1}\diff t\\
        &\ge C(p)\left(\frac{c^*}{s^*}\right)^{\frac{p}{p-2}}\int_{1-\frac{1}{C_0+1}}^1 t^{\frac{p}{p-2}}(1-t)^{p-1}\diff t.
    \end{align*}
    Then the inequality \refe{sgec} holds.
    
    \subsubsection*{Case 4} $T\le M, t_0\le M$. If $M-T\ge t_0$, we have
    \begin{align*}
        1&\ge C(p)\left(\frac{c^*}{s^* M^{p-1}}\right)^{\frac{p}{p-2}}T^{p-1}\int_0^{t_0}t^{\frac{p}{p-2}} \diff t+\frac{1}{2}\int_{t_0}^{M-T}T^{p-1}\diff t+\frac{1}{2}\int_{M-T}^{M}(M-t)^{p-1}\diff t\\
        &\ge \frac{1}{2}\int_{M-T}^{M}(M-t)^{p-1}\diff t \ge\frac{M^p}{2}\int_{1-\frac{T}{M}}^{1}(1-t)^{p-1}\diff t \ge \frac{M^p}{2}\int_{1-\frac{1}{C_0+1}}^{1}(1-t)^{p-1}\diff t.
    \end{align*}
    Then $M\le C$, and we conclude a positive lower bound for $s^*$ by Step 4. 
    
    If $M-T< t_0$, we have 
    \begin{align*}
        1&\ge C(p)\left(\frac{c^*}{s^* M^{p-1}}\right)^{\frac{p}{p-2}}T^{p-1}\int_0^{M-T}t^{\frac{p}{p-2}} \diff t+C(p)\left(\frac{c^*}{s^* M^{p-1}}\right)^{\frac{p}{p-2}}\int_{M-T}^{t_0}t^{\frac{p}{p-2}}(M-t)^{p-1}\diff t\\
        &+\frac{1}{2}\int_{t_0}^{M}(M-t)^{p-1}\diff t\\
        &\ge C(p)\left(\frac{c^*}{s^* M^{p-1}}\right)^{\frac{p}{p-2}}\int_{M-T}^{t_0}t^{\frac{p}{p-2}}(M-t)^{p-1}\diff t+\frac{1}{2}\int_{t_0}^{M}(M-t)^{p-1}\diff t\\
        &= C(p)\left(\frac{c^*}{s^*}\right)^{\frac{p}{p-2}}\int_{1-T/M}^{t_0/M}t^{\frac{p}{p-2}}(1-t)^{p-1}\diff t+\frac{M^p}{2}\int_{t_0/M}^{1}(1-t)^{p-1}\diff t\\
        &\ge C(p)\left(\frac{c^*}{s^*}\right)^{\frac{p}{p-2}}\int_{1-\frac{1}{C_0+1}}^{t_0/M}t^{\frac{p}{p-2}}(1-t)^{p-1}\diff t+\frac{M^p}{2}\int_{t_0/M}^{1}(1-t)^{p-1}\diff t.\\
    \end{align*}
    We discuss two different cases. If $t_0/M\le 1-\frac{1}{2C_0+1}$, we have  
    \begin{equation*}
        1\ge\frac{M^p}{2}\int_{t_0/M}^{1}(1-t)^{p-1}\diff t \ge\frac{M^p}{2}\int_{\frac{1}{2C_0+1}}^{1}(1-t)^{p-1}\diff t.
    \end{equation*}
    Then $M\le C$ and we conclude a positive lower bound for $s^*$ by Step 4. If $t_0/M> 1-\frac{1}{2C_0+1}$, we have 
    \begin{equation*}
        1\ge C(p)\left(\frac{c^*}{s^*}\right)^{\frac{p}{p-2}}\int_{1-\frac{1}{C_0+1}}^{t_0/M}t^{\frac{p}{p-2}}(1-t)^{p-1}\diff t \ge C(p)\left(\frac{c^*}{s^*}\right)^{\frac{p}{p-2}}\int_{1-\frac{1}{C_0+1}}^{1-\frac{1}{2C_0+1}}t^{\frac{p}{p-2}}(1-t)^{p-1}\diff t,
    \end{equation*}
    and hence the inequality \refe{sgec} holds.
    
    In conclusion, we have bounded $s^*$ from below by a uniform positive number, and the proof of $(1)\Rightarrow (2)$ is completed.

    The followings are two lemmas used above.
    \begin{lem}\label{existence and regularity of v}
        There is a function $v\in C^{2,\alpha}(X)$ such that \refe{s*} attains infimum.
    \end{lem}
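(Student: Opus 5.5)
The plan is the direct method of the calculus of variations, followed by elliptic regularity for the Euler--Lagrange equation \eqref{v1}. We are in the case $s^*\le 1$, so the admissible class in \eqref{s*} is nonempty. Take a minimizing sequence $v_k\in H^1(X)$ with $\int_X v_k\,\omega^n=0$, $\int_X\mathcal{F}(v_k)\frac{\omega^n}{V_\omega}=1$ and $\int_X|\nabla v_k|^2_\omega\frac{\omega^n}{V_\omega}\to s^*$. By the Poincar\'e inequality on the fixed compact manifold $(X,\omega)$, the mean-zero condition gives a uniform $H^1$ bound. We then pass to a subsequence converging weakly in $H^1$, strongly in $L^2$, and almost everywhere, with limit $v$.

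The key point is that the constraint survives the limit, and this is where the truncation at level $T$ matters. Since $|f(t)|\le T^{p-1}$, the function $\mathcal{F}$ is globally Lipschitz with constant $T^{p-1}$, so
\[
\int_X|\mathcal{F}(v_k)-\mathcal{F}(v)|\,\omega^n\le T^{p-1}\|v_k-v\|_{L^1}\to 0 .
\]
Hence $\int_X\mathcal{F}(v)\frac{\omega^n}{V_\omega}=1$. The condition $\int_X v\,\omega^n=0$ also passes to the limit by $L^2$ convergence. Lower semicontinuity of the Dirichlet energy under weak convergence then shows that $v$ attains $s^*$.

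Next we derive the Euler--Lagrange equation. The map $v\mapsto\int_X\mathcal{F}(v)$ is $C^1$ on $H^1$ with derivative $\int_X f(v)\,\cdot$, because $f$ is bounded and continuous. The two constraint differentials are linearly independent at $v$. Otherwise $f(v)$ would be constant a.e. Since $\int_X v=0$, this forces $v\equiv0$ (as $f$ is strictly increasing on $(-T,T)$), or else $v\ge T$ or $v\le -T$ a.e., which contradicts mean zero. Yet $v\equiv 0$ violates $\int_X\mathcal{F}(v)=1$. So Lagrange multipliers exist, and $v$ is a weak solution of \eqref{v1}: $2\Delta_\omega v=-\lambda f(v)-\mu$.

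Regularity is a bootstrap. The right-hand side lies in $L^\infty$, so $L^p$ elliptic theory gives $v\in W^{2,r}$ for every $r<\infty$, hence $v\in C^{1,\beta}$. Then $f(v)$ is Lipschitz, since $f$ is locally Lipschitz for $p>2$ and globally Lipschitz after truncation. Schauder estimates therefore give $v\in C^{2,\alpha}$.

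I expect the main obstacle to be the nondegeneracy of the constraints, which is needed to justify the multipliers. It is handled by the monotonicity argument above. The truncation by $T$ is exactly what makes the compactness step work without any Sobolev embedding for the exponent $p$.
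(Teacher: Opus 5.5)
Your proposal is correct and follows the paper's proof essentially step for step. Both arguments take a minimizing sequence bounded in $H^1$ by Poincar\'e, pass to the limit in the constraint using compactness and the Lipschitz bound on $\mathcal{F}$, conclude by weak lower semicontinuity, and then bootstrap ($L^p$ theory to $C^{1,\alpha}$, then Schauder to $C^{2,\alpha}$). The only addition is your check that the two constraint differentials are linearly independent, which actually justifies the Lagrange multipliers in \eqref{v1}; the paper takes this for granted from its formal variation of $\mathcal{L}$.
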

    \begin{proof}
        Let $\{v_n\}\subset H^1(X)$ be a minimizing sequence satisfying $\int_X \mathcal{F}(v_n)e^F\omega_X^n=1, \int_X v_n\omega^n=0$ and $\int_X |\nabla v_n|^2 e^F\omega_X^n\to s^* (n\to\infty)$. Then by Poincar\'e inequality, $\{v_n\}$ is bounded in $H^1(X)$. Since $H^1(X)$ compactly embeds into $L^1(X)$, after passing to a subsequence, we have 
        \begin{equation}
            v_n\rightharpoonup v \,\,\mathrm{in}\,\, H^1(X), v_n\to v \,\, \mathrm{in}\,\, L^1(X).\nonumber
        \end{equation}
        The function $\mathcal{F}$ is Lipschitz(its derivative $f$ is bounded), therefore $\mathcal{F}(v_n)\to \mathcal{F}(v)$ in $L^1(X)$. Consequently
        \[\int_X\mathcal{F}(v)e^F\omega_X^n=1, \int_X v e^F\omega_X^n=0.\]
        Thus the limit $v$ satisfies the constraints. By the weak lower semicontinuity in $H^1(X)$, we have 
        \[\int_X |\nabla v|^2_\omega e^F\omega_X^n\le \liminf_{n\to\infty} \int_X |\nabla v_n|^2_\omega e^F\omega_X^n=s^*.\]
        Hence $v$ attains $s^*$. From the previous discussion,  $v$ satisfies the following equation
        \begin{equation*}
            \begin{cases}
                \Delta_\omega v=g\\
                \int_X v e^F\omega_X^n=0
            \end{cases}
        \end{equation*}
        where $g=-\hat{\lambda}f(v)+\hat{\lambda}\int_X f(v)e^F\omega_X^n\in L^{\infty}(X)$. By elliptic $L^p$ regularity and a standard bootstrapping argument, we obtain $v\in C^{1,\alpha}(X)$. Since $f$ is Lipschitz, $g$ becomes H\"older continuous. Now Schauder estimates yield that $v\in C^{2,\alpha}(X)$.
    \end{proof}

    \begin{lem}\label{existence and regularity of psi}
         For any subset $U\subset X$, there is a function $\phi \in C^{1,\alpha}(X)$ such that \refe{lambda(U)} attains infimum.
    \end{lem}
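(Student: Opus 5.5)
We prove this by the direct method in the calculus of variations, in parallel with \refl{existence and regularity of v}; we take $U$ open as in \refe{lambda(U)}, with $\mu_\omega(U)>0$ and $\mu_\omega(X\setminus U)>0$. If $U=X$ up to a null set, the problem is the ordinary first nonzero eigenvalue problem and $\phi$ is smooth. We assume this nondegeneracy from now on.

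\textbf{Step 1: the admissible class is nonempty.} We need a function $\psi\in H^1(X)$ with $\int_X\psi\omega^n=0$ and $\int_U\psi^2\omega^n>0$. Take a smooth bump function $\eta$ supported in $U$ and not identically zero. Take a second smooth bump function $\eta'$ with positive integral, supported in a small ball that meets $X\setminus \overline U$ or is disjoint from $\operatorname{supp}\eta$. Then set $\psi=\eta-c\eta'$, with $c$ chosen so that $\int_X\psi\omega^n=0$. Normalizing, we get $\lambda(U)<\infty$.

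\textbf{Step 2: a minimizer exists.} Let $\{\psi_k\}$ be a minimizing sequence. Since each $\psi_k$ has mean zero, the Poincar\'e inequality for the fixed metric $\omega$ bounds $\Vert\psi_k\Vert_{L^2(X)}$ by the Dirichlet energy, so $\{\psi_k\}$ is bounded in $H^1(X)$. By Rellich compactness we may pass to a subsequence with $\psi_k\rightharpoonup\phi$ weakly in $H^1$ and $\psi_k\to\phi$ strongly in $L^2(X)$. Both constraints then pass to the limit:
\begin{itemize}
\item $\int_U\phi^2\,\frac{\omega^n}{V_\omega}=1$, since restriction to $U$ is continuous in $L^2$;
\item $\int_X\phi\,\omega^n=0$.
\end{itemize}
Weak lower semicontinuity of the Dirichlet energy shows that $\phi$ attains $\lambda(U)$.

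\textbf{Step 3: the Euler--Lagrange equation.} We compute the first variation with Lagrange multipliers, as in Step~1 of Part One, for variations $\phi+s\xi$ with $\xi\in H^1(X)$. This shows that $\phi$ is a weak solution of \refe{eigenvalue equation}. The first multiplier is identified by testing the equation with $\phi$ itself: this gives the coefficient $\lambda(U)$ in front of $\phi\chi_U$. The constant term is forced by integrating the equation over $X$.

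\textbf{Step 4: regularity, the main point.} The right-hand side of \refe{eigenvalue equation} is $g=-\lambda(U)\phi\chi_U+\mathrm{const}$. It is only as regular as $\phi$ and jumps across $\partial U$, so Schauder theory does not apply; this is why the conclusion is only $C^{1,\alpha}$ and not $C^{2,\alpha}$. We bootstrap with elliptic $L^p$ theory on the compact manifold $(X,\omega)$:
\begin{itemize}
\item $\phi\in H^1$ gives $g\in L^{2n/(n-1)}$ (Sobolev embedding in real dimension $2n$);
\item this gives $\phi\in W^{2,2n/(n-1)}$, which improves the integrability of $g$;
\item after finitely many iterations, $g\in L^r$ for every $r<\infty$, hence $\phi\in W^{2,r}$ for all $r$;
\item Morrey's embedding then gives $\phi\in C^{1,\alpha}(X)$ for every $\alpha\in(0,1)$.
\end{itemize}
In particular $\phi$ is continuous, which justifies the normalization $\Vert\phi\Vert_{L^\infty}=1$ used in Step~2 of Part One.
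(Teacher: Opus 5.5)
Your proposal is correct and follows the same route as the paper. Both use the direct method via Poincar\'e and the compact embedding $H^1(X)\hookrightarrow L^2(X)$, then the Euler--Lagrange equation \refe{eigenvalue equation}, whose right-hand side is merely bounded and jumps across $\partial U$, and finally an elliptic $L^p$ bootstrap to $W^{2,r}$ for all $r<\infty$, hence $C^{1,\alpha}$; you additionally spell out what the paper leaves implicit, namely nonemptiness of the admissible class, identification of the multipliers by testing with $\phi$ and with $1$, and the explicit Sobolev chain.
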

    \begin{proof}
        Using the compact embedding  $H^1(X) \hookrightarrow L^2(X)$, we can prove $\phi\in H^1(X)$ by the same argument as \refl{existence and regularity of v}. From the previous discussion,  $\phi$ satisfies 
         \begin{equation}\label{psi h}
        \begin{cases}
            \Delta_\omega \phi=h,\\
            \int_X\phi\omega^n=0,\nonumber
        \end{cases}
    \end{equation}
    where $h=-\lambda(U)\phi\chi_U+\lambda(U)\int_U \phi e^F\omega_X^n\in L^2(X)$. By elliptic $L^p$ regularity and a standard bootstrapping argument, we obtain $\phi,h\in L^\infty(X)$. Furthermore, using equation concludes $\phi\in C^{1,\alpha}(X)$.
    \end{proof}

\subsection{Proof of \reft{thm-green-sob}, Part Two: $(2)\Rightarrow (1)$.} 
   Next we prove the implication $(2)\Rightarrow(1)$. Assume $(2)$ holds.
   
  \subsection*{Step I: $L^\infty$ estimate for Poisson equation.} We need the following lemma.
    \begin{lem}\label{infty est of Poi}
        Let $u\in H^1(X)$ be the weak solution to 
        \begin{equation}
            \begin{cases}
                -\Delta_\omega u=f\\
                \int_X u\omega^n=0
            \end{cases}
        \end{equation}
        where $\int_X f\omega^n=0$. There exists constant $C>0$ that only depends on $p, C_S$ such that 
        \begin{equation}
            \Vert u\Vert_{L^\infty(X)}\le C\Vert f\Vert_{L^\infty}.
        \end{equation}
    \end{lem}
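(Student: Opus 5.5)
The plan is a De Giorgi (or Moser) iteration driven by the Sobolev inequality \eqref{Sob ineq}, all in the normalized measure $\mu_\omega=\omega^n/V_\omega$. Two normalizations come first. Dividing by $\Vert f\Vert_{L^\infty}$, I may assume $\Vert f\Vert_{L^\infty}\le 1$. Replacing $u$ by $-u$ when needed, it suffices to bound $\sup u$ from above.

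The first step is a crude $L^2$ bound, and here \eqref{Sob ineq} together with H\"older ($p>2$) already gives a Poincar\'e inequality $\Vert u\Vert_{L^2(\mu_\omega)}\le C_S\Vert\nabla u\Vert_{L^2(\mu_\omega)}$ for mean-zero $u$. Testing the equation with $u$ gives
\[
\int|\nabla u|^2\,d\mu_\omega=\int fu\,d\mu_\omega\le \Vert u\Vert_{L^1}\le C_S\Vert\nabla u\Vert_{L^2}.
\]
Hence $\Vert\nabla u\Vert_{L^2}\le C_S$ and $\Vert u\Vert_{L^2}\le C_S^2$. Chebyshev then yields $k_0=k_0(C_S)$ such that $\mu_\omega(\{u>k_0\})\le 1/4$.

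Next comes the iteration for levels $k\ge k_0$. Put $A_k=\{u>k\}$ and test the equation with $(u-k)_+$, which gives $\int|\nabla(u-k)_+|^2\le\int_{A_k}(u-k)_+\,d\mu_\omega$. The Sobolev inequality \eqref{Sob ineq} only controls $w=(u-k)_+$ minus its mean $\bar w$, so I handle the mean via the support. Since $\mu_\omega(\mathrm{supp}\,w)\le 1/4$, on the complement of $A_k$ (which has measure at least $3/4$) the function $w-\bar w$ equals $-\bar w$. Therefore
\[
|\bar w|\cdot (3/4)^{1/p}\le \Vert w-\bar w\Vert_{L^p}\le C_S\Vert\nabla w\Vert_{L^2},
\]
and so $\Vert w\Vert_{L^p}\le C\Vert\nabla w\Vert_{L^2}$ with $C=C(p,C_S)$. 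Combining with H\"older,
\[
\Vert w\Vert_{L^p}^2\le C\,\Vert w\Vert_{L^p}\,\mu_\omega(A_k)^{1-1/p},
\]
hence $\Vert w\Vert_{L^p}\le C\mu_\omega(A_k)^{1-1/p}$. For $h>k$, using $\Vert w\Vert_{L^p}\ge (h-k)\mu_\omega(A_h)^{1/p}$, this becomes
\[
\mu_\omega(A_h)\le \frac{C}{(h-k)^p}\,\mu_\omega(A_k)^{p-1}.
\]
Because $p>2$, the exponent satisfies $p-1>1$, which is exactly what the standard De Giorgi/Stampacchia iteration lemma needs. It gives $\mu_\omega(A_{k_0+d})=0$ with $d^p\le C\mu_\omega(A_{k_0})^{p-2}2^{p(p-1)/(p-2)}$, so $\sup u\le k_0+d\le C(p,C_S)$. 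Undoing the normalization, this is the claimed bound $\Vert u\Vert_{L^\infty}\le C\Vert f\Vert_{L^\infty}$.

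The main obstacle is that \eqref{Sob ineq} concerns mean-subtracted functions, while truncations $(u-k)_+$ do not have mean zero. This is handled by first securing the small-support condition $\mu_\omega(A_{k_0})\le 1/4$ through the $L^2$ bound and then absorbing the mean as above. A secondary point is that $u\in H^1$ is only a weak solution, but $(u-k)_+$ is an admissible test function, so no extra regularity is needed. All constants depend only on $p$ and $C_S$, which is what the statement requires.
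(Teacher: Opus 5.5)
Your argument is correct, but it follows a different route from the paper's. The paper uses Moser iteration. After essentially the same first step (testing with $u$ and using \eqref{Sob ineq} to get $\Vert u\Vert_{L^p}\le C$), it tests with $|u|^{r-2}u$. It then applies \eqref{Sob ineq} to $w=|u|^{r/2}$ in the inhomogeneous form $\Vert w\Vert_{L^p}\le \Vert w\Vert_{L^1}+C_S\Vert\nabla w\Vert_{L^2}$. There the nonzero mean is absorbed by the triangle inequality, since $|\bar w|\le \Vert w\Vert_{L^1}$. Finally it iterates along $r_{k+1}=\frac p2 r_k$ to pass from $L^p$ to $L^\infty$.

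You instead run a De Giorgi--Stampacchia truncation argument. You deal with the mean of $(u-k)_+$ by first forcing $\mu_\omega(\{u>k_0\})\le 1/4$ through the $L^2$ bound and Chebyshev. This makes the homogeneous estimate $\Vert w\Vert_{L^p}\le C\Vert\nabla w\Vert_{L^2}$ available for every truncation at level $k\ge k_0$. You then close with the level-set inequality $\mu_\omega(A_h)\le C(h-k)^{-p}\mu_\omega(A_k)^{p-1}$ and Stampacchia's lemma.

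Your route has two advantages. The only test functions are $(u-k)_+$, which are admissible for any $H^1$ weak solution. By contrast, testing with $|u|^{r-2}u$ (and the chain rule for $|u|^{r/2}$) tacitly needs some a priori integrability or regularity of $u$, or an extra truncation, to be justified. Your route also shows clearly that $p>2$ enters exactly through the superlinear exponent $p-1>1$. The paper's route skips the preliminary small-support step and yields bounds on all $L^{r}$ norms along the way, at the cost of an infinite product of iteration constants. Both give a constant depending only on $p$ and $C_S$.
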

    \begin{proof}
This follows by applying the Sobolev-type inequality \eqref{Sob ineq} and Moser iteration. For completeness we include a proof here. Without loss of generality, we assume $\Vert f\Vert_{L^\infty(X)}=1$. By taking the test function $u$, we have 
        \[
            \frac{1}{V_\omega}\int_X fu\omega^n=-\frac{1}{V_\omega}\int_X u\Delta_\omega u\omega^n=\frac{1}{V_\omega}\int_X |\nabla u|^2_\omega \omega^n.
        \]
        Using the assumption $\Vert f\Vert_{L^\infty(X)}=1$ and H\"older's inequality, we have 
        \[
            \frac{1}{V_\omega}\int_X fu\omega^n\le\frac{1}{V_\omega}\int_X |u|\omega^n\le \Vert u\Vert_{L^p(X,\frac{\omega^n}{V_\omega})}.
        \]
        By Sobolev's inequality \refe{Sob ineq}, we have 
        \[
            \frac{1}{V_\omega}\int_X |\nabla u|^2_\omega \omega^n\ge C_S^{-2}\Vert u\Vert^2_{L^p(X,\frac{\omega^n}{V_\omega})}.
        \]
        Thus we have 
        \begin{equation}\label{u Lp}
            \Vert u\Vert_{L^p(X,\frac{\omega^n}{V_\omega})}\le C.
        \end{equation}
        Next by taking the test function $|u|^{r-2}u$ for any $r>2$, we have 
        \begin{equation*}
            -\frac{1}{V_\omega}\int_X |u|^{r-2}u\Delta_\omega u\omega^n=\frac{1}{V_\omega}\int_X f|u|^{r-2}u\omega^n
        \end{equation*}
        By Stokes' formula, the left-hand side satisfies
        \begin{equation*}
            -\frac{1}{V_\omega}\int_X |u|^{r-2}u\Delta_\omega u\omega^n=\frac{4(r-1)}{r^2}\int_X |\nabla (u^{r/2})|^2_\omega \frac{\omega^n}{V_\omega}.
        \end{equation*}
        Combining the assumption $\Vert f\Vert_{L^\infty(X)}=1$, we have 
        \begin{equation}\label{grad u est}
            \int_X |\nabla (u^{r/2})|^2_\omega \frac{\omega^n}{V_\omega}\le \frac{r^2}{4(r-1)}\int_X u^{r-1}\frac{\omega^n}{V_\omega}.
        \end{equation}
        By Sobolev's inequality \refe{Sob ineq}
        \begin{align*}
            \left(\frac{1}{V_\omega}\int_X |u|^{\frac{rp}{2}}\omega^n\right)^{1/p}&\le \left(\frac{1}{V_\omega}\int_X |u|^{r/2}\omega^n\right)+C_S\left(\frac{1}{V_\omega}\int_X |\nabla(u^{r/2})|^2_\omega \omega^n\right)^{1/2}\\
            \text{by \refe{grad u est}} &\le \left(\frac{1}{V_\omega}\int_X |u|^{r/2}\omega^n\right)+Cr^{1/2}\left(\frac{1}{V_\omega}\int_X |u|^{r-1} \omega^n\right)^{1/2}\\
            \text{(by H\"older's ineq.)} &\le\left(\frac{1}{V_\omega}\int_X |u|^r\omega^n\right)^{1/2}+Cr^{1/2}\left(\frac{1}{V_\omega}\int_X |u|^r\omega^n\right)^{\frac{r-1}{2r}}.
        \end{align*}
        Squaring the inequalities gives
        \begin{align*}
            \left(\frac{1}{V_\omega}\int_X |u|^{\frac{rp}{2}}\omega^n\right)^{2/p}&\le (1+C)r (\frac{1}{V_\omega}\int_X |u|^r\omega^n+\left(\frac{1}{V_\omega}\int_X |u|^r\omega^n\right)^{1-\frac{1}{r}})\\
            &\le Cr\max\{1,\Vert u\Vert_{L^r(X,\frac{\omega^n}{V_\omega})}^r\}.
        \end{align*}
        Taking the r-th root gives
        \[\Vert u\Vert_{L^{\frac{rp}{2}}(X,\frac{\omega^n}{V_\omega})}\le (Cr)^{1/r}\max\{1,\Vert u\Vert_{L^r(X,\frac{\omega^n}{V_\omega})}\}.\]
        We define $r_0=p, r_{k+1}=\frac{p}{2}r_k$ and $M_k=\max\{1,\Vert u\Vert_{L^{r_k}(X,\frac{\omega^n}{V_\omega})}\}$, then by Moser's iteration
        \[
            M_{k+1}\le (Cr_k)^{1/r_k}M_k\le C^{\frac{1}{p}\sum_{i=0}^{k}\left(\frac{2}{p}\right)^i}(p/2)^{\frac{1}{p}\sum_{i=0}^{k}i\left(\frac{2}{p}\right)^i}M_0.
        \]
        Letting $k\to\infty$ and using \refe{u Lp}, we have 
        \[\Vert u\Vert_{L^\infty}\le C M_0=C \max\{1,\Vert u\Vert_{L^p(X,\frac{\omega^n}{V_\omega})}\}\le C.\]
    \end{proof}
    
    \subsection*{Step II: Bounding $L^1$-norm of Green's function.} 
    Next let's prove that $L^1$-norm of Green's function is uniformly bounded in terms of $C_S$ and $p$. For any fixed $x\in X$, we regard $G(y):=G_\omega(x,y)$ and consider the following linear equation
    \begin{equation}
        \begin{cases}
            -\Delta_\omega u=\chi_{\{G\ge0\}}-\frac{1}{V_\omega}\int_{\{G\ge 0\}} \omega^n=:f\\
            \int_X u\omega^n=0.
        \end{cases}
    \end{equation}
    Since $\Vert f\Vert_{L^\infty}\le 1$, by \refl{infty est of Poi} there exists a constant $C=C(C_S,p)>0$ such that $\Vert u\Vert_{L^\infty}\le C$.
    By Green's formula, we have 
    \[u(x)=\int_X G(x,y)f(y)\omega^n=\int_{\{G\ge 0\}} G\omega^n\le C.\]
    Combining this with the fact that 
    \[\int_{\{G\ge0\}}G\omega^n=-\int_{\{G\le 0\}}G\omega^n,\]
    we find that 
    \[\int_X |G|\omega^n\le C_1.\]
    
    \subsection*{Step III: Bounding $L^{\frac{p}{2},\infty}$-norm of Green's function.} Finally, we upgrade the $L^1$ bound to an $L^{\frac{p}{2},\infty}$ bound. For any $t>0$, we let 
    \begin{equation}
        u_t(y)=\begin{cases}
            t, &G(y)V_\omega>t;\\
            G(y)V_\omega, &|G(y)V_\omega|\le t;\\
            -t, &G(y)V_\omega<-t.
        \end{cases}
    \end{equation} 
    By Green's formula 
    \[t=u_t(x)=\frac{1}{V_\omega}\int_X u_t\omega^n+\int_X \langle\nabla G,\nabla u_t\rangle_\omega \omega^n\]
    we have 
    \begin{align*}
        t-\frac{1}{V_\omega}\int_X u_t\omega^n&=\frac{1}{V_\omega}\int_X |\nabla u_t|^2_\omega \omega^n\\
        \text{(by Sobolev's ineq. \refe{Sob ineq})} &\ge c \left(\frac{1}{V_\omega}\int_X|u_t-\bar{u_t}|^p\omega^n\right)^{2/p}\\
        &\ge c\left(\frac{1}{V_\omega}\int_{A_{t+}}|t-\bar{u_t}|^p\omega^n+\frac{1}{V_\omega}\int_{A_{t-}}|t+\bar{u_t}|^p\omega^n\right)^{2/p}\\
        &= c\left(\mu_\omega(A_{t+})|t-\bar{u_t}|^p+\mu_\omega(A_{t-})|t+\bar{u_t}|^p\right)^{2/p},
    \end{align*}
    where $A_{t+}=\{GV_\omega> t\}$, $A_{t-}=\{GV_\omega< -t\}$, $\overline{u_t}=\frac{1}{V_\omega}\int_X u_t\omega^n$ and the constant $c=C_S^{-2}$. Since 
    \[|\overline{u_t}|\le \frac{1}{V_\omega}\int_X |u_t|\omega^n\le \int_X |G|\omega^n\le C_1,\]
    we have for $t>2C_1$
    \[2t\ge t-\frac{1}{V_\omega}\int_X u_t\omega^n\ge c t^2\left(\mu_\omega(A_{t+})+\mu_\omega(A_{t-})\right)^{2/p}=c t^2\left(\mu_\omega(A_{t})\right)^{2/p},\]
    where $A_t=\{|GV_\omega|>t\}=A_{t-}\cup A_{t+}$. Thus we have $t^{p/2}\mu_\omega(A_t)\le C$, which completes the proof of $(1)\Rightarrow (2)$.
    
    In conclusion, \reft{thm-green-sob} is proved.

    \begin{rem}
    Using identical arguments, we have the same conclusion in \reft{thm-green-sob} for a closed Riemannian manifold.
    \end{rem}

    \section{$L^{\frac{nq}{nq-q+n},\infty}$ estimate on Green's function}\label{sect-green}
Main result in this section is an $L^{\frac{nq}{nq-q+n},\infty}$ estimate on Green's function under $q$-Nash entropy condition.
\begin{thm}\label{thm-green}
    Let $X$ be an $n$-dimensional compact K\"ahler manifold equipped with a K\"ahler metric $\omega_X$ of $V_{\omega_X}=1$, then for any $A,K>0$, $q>n$, there exists $C=C(X,\omega_X,n,A,q,K,p)>0$ such that for any $\omega\in\mathcal{V}(X,\omega_X,n,A,q,K)$ and $x\in X$, we have the following estimate
    \begin{equation}\label{weak q-norm}
        \Vert G(x,\cdot)V_\omega\Vert_{L^{p/2,\infty}(X,\omega)}:=\sup_{t>0}\{t^{p/2}\mu_\omega(\{y\in X:|G(x,y)V_\omega|>t\})\}\le C.
    \end{equation}
    where $p:=\frac{2nq}{nq-q+n}$, $\mu_\omega(U):=\int_U e^F\omega_X^n=\frac{1}{V_\omega}\int_U \omega^n$ for any subset $U\subset X$.
\end{thm}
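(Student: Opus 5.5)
Fix $x\in X$, write $G=G_\omega(x,\cdot)$, and for $t>0$ set $A_t=\{GV_\omega>t\}$; the set $\{GV_\omega<-t\}$ is handled symmetrically. The goal is $\mu_\omega(A_t)\le Ct^{-p/2}$ with $p/2=\frac{nq}{nq-q+n}$. Following the Tian--Wang duality idea, we test $G$ against a Poisson problem whose right-hand side is supported on $A_t$. Let $u$ solve
\begin{equation*}
-\Delta_\omega u=\frac{\chi_{A_t}}{\mu_\omega(A_t)}-1,\qquad \int_X u\,\omega^n=0 .
\end{equation*}
Green's formula gives $u(x)=\frac{1}{\mu_\omega(A_t)}\int_{A_t}G\,\omega^n\ge t$. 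The whole problem is therefore an upper bound $u(x)\le C\,\mu_\omega(A_t)^{-\gamma}$ with $\gamma=1-\frac{2}{p}=\frac{q-n}{nq}$, because $t\le C\mu_\omega(A_t)^{-\gamma}$ is exactly $\mu_\omega(A_t)\le Ct^{-1/\gamma}=Ct^{-p/2}$. So the key step is an $L^\infty$ estimate for the Poisson equation $\Delta_\omega u=g$ on $(X,\omega)$, with $\int g\,\omega^n=0$, that is sharp in its dependence on the size of the support of $g$.

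To get this estimate I would use the Guo--Phong--Song--Sturm auxiliary complex Monge--Amp\`ere method, taking care to avoid the $\epsilon$-loss. Write $\omega^n=V_\omega e^F\omega_X^n$ and $h=\chi_{A_t}/\mu_\omega(A_t)$, and fix $s>0$. Solve
\begin{equation*}
(\omega+\dd\psi)^n=V_\omega\,\frac{(u-s)_+\,h+\text{small }\delta}{\int_X\big((u-s)_+h+\delta\big)e^F\omega_X^n}\,e^F\omega_X^n .
\end{equation*}
Then apply the maximum principle to $-\varepsilon(-\psi+\Lambda)^{\frac{n}{n+1}}-(u-s)$, using $\Delta_\omega\psi\ge n\,(\text{density})^{1/n}-n$ through AM--GM. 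This yields, as in the GPSS argument,
\begin{equation*}
(u-s)_+\le C\,A_s^{\frac{1}{n+1}}\,(-\psi+\Lambda)^{\frac{n}{n+1}},\qquad A_s=\int_{\{u>s\}}(u-s)\,h\,e^F\omega_X^n .
\end{equation*}
Next, the $q$-Nash entropy bound and the $\omega_X$-energy bound give a uniform $\alpha$-invariant type estimate: $\int_X\exp(\beta(-\psi)^{\frac{n+1}{n}})e^F\omega_X^n\le C$. Combining this with the pointwise bound gives exponential integrability of $(u-s)_+^{\frac{n+1}{n}}/A_s^{\frac1n}$ with respect to $e^F\omega_X^n$.

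From here I would run a De Giorgi iteration on $\phi(s)=\int_{\{u>s\}}h\,e^F\omega_X^n$, which is bounded by $\mu_\omega(\{u>s\})/\mu_\omega(A_t)$. The exponent $\frac{nq}{q-n}$ enters through H\"older's inequality in the Orlicz pair $L^1\log L^q$ and $\exp L^{1/q}$. The crucial point is to exploit the $L\log^q L$ density bound precisely: the measure of $\{u>s\}$ with respect to $\omega_X^n$ gets converted into $\mu_\omega$-measure with only a $\log^{-q}$ gain. This is what forces the relation between $n$ and $q$, and it gives $\|u\|_\infty\le C\|h\|$ in the dual Lorentz sense, $\|h\|_{L^{r,1}}$ with $r=\frac{p}{p-2}$. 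For $h=\chi_{A_t}/\mu(A_t)$ this norm is $\mu(A_t)^{1/r-1}=\mu(A_t)^{-\gamma}$, which is the required bound.

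I expect the main obstacle to be exactly this sharp, endpoint step. The GPSS iteration naturally loses an $\epsilon$, or a power of $\log$ as in the earlier work \cite{ZwZy}, because it uses H\"older's inequality with strict exponents. To remove the loss, I would apply the iteration to the weak-type (Lorentz-scale) quantity directly. The plan is to normalize by $\mu(A_t)$ and choose the auxiliary equation's right-hand side supported on $A_t$, so that the entropy of the new density is controlled by $\log(1/\mu(A_t))$. I would then show that the resulting $\log$ factors exactly cancel against the Young-inequality bounds for $e^F\log^q(1+e^F)$. This cancellation is analogous to the treatment of linearized Monge--Amp\`ere equations with $L^\infty$ density by Tian--Wang. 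If it works, Theorem \ref{thm-green-sob} immediately converts the result into the Sobolev inequality \eqref{optimal-sob}.
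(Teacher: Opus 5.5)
Your reduction is correct as far as it goes: $u(x)=\mu_\omega(A_t)^{-1}\int_{A_t}G\,\omega^n\ge t$. The target exponent is misstated, though. You need $u(x)\le C\mu_\omega(A_t)^{-2/p}$ with $\frac2p=\frac{nq-q+n}{nq}$, not $\gamma=1-\frac2p$. With your $\gamma$ one would get $\mu_\omega(A_t)\le Ct^{-nq/(q-n)}$, which is false already for smooth metrics; your own Lorentz computation $\mu^{1/r-1}$ does equal $\mu^{-2/p}$.

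More importantly, this only trades the theorem for an essentially equivalent statement (by $L^{r,1}$--$L^{p/2,\infty}$ duality). The $\Delta_\omega$-based auxiliary Monge--Amp\`ere argument you propose for it does not close at the sharp scale:
\begin{itemize}
\item Since $\Delta_\omega\psi=\mathrm{tr}_\omega(\omega+\dd\psi)-n$, the maximum principle produces a term $-\frac{n^2\varepsilon}{n+1}(-\psi+\Lambda)^{-\frac{1}{n+1}}$. At points where $h$ vanishes, this can only be absorbed by the $+1$ in $\Delta_\omega u=1-h$, which forces $\Lambda\gtrsim\varepsilon^{n+1}$.
\item For the AM--GM step to match $h$, the density must be $(u-s)_+h^n/B_s$. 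With your $(u-s)_+h$, the pointwise bound acquires a factor $h^{n-1}=\mu_\omega(A_t)^{1-n}$.
\item Hence $\varepsilon^{n+1}\sim B_s=\mu_\omega(A_t)^{-n}\int_{A_t}(u-s)_+e^F\omega_X^n$, i.e.\ $\mu_\omega(A_t)^{1-n}$ times the average of $(u-s)_+$ on $A_t$. The pointwise bound then contains an additive term of that size.
\item In the De Giorgi iteration, that term can be absorbed only once $\mu_\omega(A_t\cap\{u>s\})\ll\mu_\omega(A_t)^n$. Using a spatially uniform weight instead just returns $\|u\|_\infty\lesssim\|h\|_\infty=\mu_\omega(A_t)^{-1}$.
\item Unlike a quasi-psh function, $u$ has no a priori $L^q$ bound with which to control such terms.
\item The claimed uniform bound $\int_X\exp(\beta(-\psi)^{\frac{n+1}{n}})e^F\omega_X^n\le C$ is false. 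The $\alpha$-invariant gives $\int_X e^{-\alpha\psi}\omega_X^n\le C$, and against $e^F\omega_X^n$ the Nash entropy yields only $\int_X(-\psi)^qe^F\omega_X^n\le C$.
\item The final log-cancellation is left as a hope.
\end{itemize}

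The missing idea is to bypass the Poisson equation and use a Monge--Amp\`ere solution itself as the test function. The paper solves $\frac{1}{V_\omega}(\theta+\dd\varphi)^n=\frac{h}{A}e^F\omega_X^n$ with $h\approx\chi_{K_t}$ and $A\approx\mu_\omega(K_t)$. It proves $0\le u_\theta-\varphi\le CA^{-1/q}$ via the auxiliary equation $(\theta+\dd\psi)^n=\tau(-\varphi+u_\beta-s)B^{-1}(\theta+\dd\varphi)^n$ and the maximum principle for $\Delta_\varphi$. There the $-n$ comes from $\mathrm{tr}_{\omega_\varphi}\omega_\varphi$ and is matched against a density relative to the probability measure $\frac{h}{A}e^F\omega_X^n$, so $\Lambda\sim B_s$ sits at the natural scale. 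The extra term $B_s^q\mu_h(\Omega_s)$ is bounded a priori because $\varphi$ is itself $\theta$-psh, hence uniformly in $L^q$. Plain $L^q$ integrability $\int_X(-\psi)^qhe^F\omega_X^n\le C$ and De Giorgi with $\mu_h(\Omega_0)\le A$ then give $s_\infty\le CA^{-1/n}\mu_h(\Omega_0)^{1/n-1/q}\le CA^{-1/q}$, with no Orlicz or Lorentz refinement.

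Next, $v=\varphi-u$ (where $\omega=\theta+\dd u$) satisfies $\Delta_\omega v\ge(h/A)^{1/n}-n$ and $-CA^{-1/q}\le v\le C$. Apply Green's formula against the positive kernel $\mathcal{G}=G-\inf G+V_\omega^{-1}$, which has bounded mass by the GPSS lower bound and $L^1$ bound on $G$; this also disposes of the negative part of $G$. It gives $A^{-1/n}\int_{K_t}\mathcal{G}\,\omega^n\le C+CA^{-1/q}$, i.e.\ $t\,\mu_\omega(K_t)^{1-\frac1n+\frac1q}\le C$ with $1-\frac1n+\frac1q=\frac2p$. Here the $-n$ from AM--GM is harmless because it is integrated against $\mathcal{G}$ rather than entering a maximum principle.
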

The exponent $\frac{2nq}{nq-q+n}$ here is optimal, see Remark \ref{rem-opt}.

\subsection{Proof of Theorem \ref{thm-green}}
\subsection*{Step 1: $L^\infty$ estimate for complex complex Monge-Amp\`ere equations} We will empoy a version of $L^\infty$ estimate for complex Monge-Amp\`ere equation. Let's begin with a simple observation.
\begin{lem}\label{q-norm estimate}
   Given $q>0$. For any $\omega\in\mathcal{K}(X)$ and $F$ satisfying $\Vert e^F\Vert_{L^1\log L^q(X,\omega_X)}\le K$, if $\theta\in [\omega]$ satisfies $\theta\le \omega_X$, then there exists constant $C=C(X,\omega_X,n,q,K)$ such that for any $\varphi\in \mathrm{PSH}(X,\theta)$ satisfying $\sup_X \varphi=0$, we have
    \begin{equation*}
        \int_X (-\varphi)^q e^F\omega^n_X\le C.
    \end{equation*}
\end{lem}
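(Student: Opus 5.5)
The plan is to combine the uniform exponential integrability of $\theta$-psh functions (Skoda/Tian $\alpha$-invariant type) with the Orlicz-space Young inequality that pairs $L^1\log L^q$ against $\exp(L^{1/q})$. Since $\theta\le\omega_X$, every $\varphi\in\mathrm{PSH}(X,\theta)$ is $\omega_X$-psh. The class $\{\varphi\in \mathrm{PSH}(X,\omega_X):\sup_X\varphi=0\}$ is compact in $L^1(X,\omega_X^n)$. Hence by Skoda's uniform integrability theorem (equivalently, positivity of Tian's $\alpha$-invariant of $(X,\omega_X)$) there are $\alpha=\alpha(X,\omega_X)>0$ and $C_0=C_0(X,\omega_X)$ with
\begin{equation*}
\int_X e^{-\alpha\varphi}\,\omega_X^n\le C_0 .
\end{equation*}
This bound is independent of $\omega$, $\theta$ and $F$.

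Next I would apply a Young-type inequality. Set $a=e^F$ and $b=(-\varphi)^q\ge0$. I would use the elementary inequality, valid for $a,b\ge0$ and $\epsilon>0$,
\begin{equation*}
ab\le C_{q,\epsilon}\,a\log^q(1+a)+C_{q,\epsilon}\,e^{\epsilon b^{1/q}} .
\end{equation*}
It is proved by splitting into two cases:
\begin{itemize}
\item If $b^{1/q}\le \epsilon^{-1}\cdot 2\log(1+a)$, then $ab\le (2/\epsilon)^q a\log^q(1+a)$.
\item Otherwise $a\le e^{\epsilon b^{1/q}/2}$, so $ab\le e^{\epsilon b^{1/q}/2}b\le C_{q,\epsilon}e^{\epsilon b^{1/q}}$, since $b e^{-\epsilon b^{1/q}/2}$ is bounded.
\end{itemize}
Choosing $\epsilon=\alpha$ and integrating against $\omega_X^n$ gives
\begin{equation*}
\int_X(-\varphi)^q e^F\omega_X^n\le C_{q,\alpha}\Big(\int_X e^F\log^q(1+e^F)\,\omega_X^n+\int_X e^{-\alpha\varphi}\,\omega_X^n\Big)\le C_{q,\alpha}(K+C_0).
\end{equation*}
This is the claimed bound, with $C$ depending only on $X,\omega_X,n,q,K$.

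The only nontrivial input is the uniform Skoda estimate for normalized $\omega_X$-psh functions, which is classical. It is worth recording that uniformity over all $\theta\le\omega_X$ is automatic, because $\mathrm{PSH}(X,\theta)\subset\mathrm{PSH}(X,\omega_X)$. The hypothesis $\theta\in[\omega]$ is not needed for this lemma itself; presumably it matters later when $\theta$ is used as a reference form for auxiliary Monge--Amp\`ere equations. I expect no real obstacle; the main care is the case split in the Young inequality so that the constants depend only on $q$ and $\alpha$.
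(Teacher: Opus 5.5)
Your proposal is correct and follows essentially the same route as the paper. Both arguments use the $\alpha$-invariant of $\omega_X$, valid since $\mathrm{PSH}(X,\theta)\subset\mathrm{PSH}(X,\omega_X)$, together with a Young-type inequality pairing $e^F\log^q(1+e^F)$ against $e^{-\alpha\varphi}$. Your two-case split is simply an elementary proof of the generalized Young inequality with $\eta(x)=\frac{1}{\beta}\log^q(1+x)$ and $2\beta^{1/q}=\alpha$, which the paper invokes directly.
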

\begin{proof}
    This is essentially contained in \cite[Subsection 2.2]{GL} using general theory on Orlicz space (when $q>n$). Here we present a direct simple proof. By the assumption on $\theta$, we have $\mathrm{PSH}(X,\theta)\subseteq\mathrm{PSH}(X,\omega_X)$. Applying the $\alpha$-invariant of $\omega_X$ (see \cite{T}), there exist constants $\alpha,\,C>0$ such that 
    \begin{equation*}
        \int_X e^{-\alpha \varphi}\omega_X^n\le C.
    \end{equation*}
    Define $\eta:\mathbb{R}_+\to\mathbb{R}_+$ by $\eta(x)=\frac{1}{\beta}\log^q(1+x)$ where $\beta>0$ is to be determined. By the generalized Young's inequality with respect to $\eta$ we see
    \begin{equation*}
        \begin{aligned}
            \int_X &(-\varphi)^q e^F\omega_X^n\leq \int_X (-\varphi)^q \eta^{-1}((-\varphi)^q)\omega_X^n+\int_X e^F\eta(e^F)\omega_X^n\\
            &=\int_X (-\varphi)^q(\exp\{\beta^{1/q}(-\varphi)\}-1)\omega_X^n+\beta\int_X\log^q(1+e^F)e^F\omega_X^n\\
            &\le C\int_X \exp\{2\beta^{1/q}(-\varphi)\}\omega_X^n+K\le C,\nonumber
        \end{aligned}
    \end{equation*}
    where we have chosen $\beta$ with $2\beta^{1/q}=\alpha$.
    
    Lemma \ref{q-norm estimate} is proved.
\end{proof}

\begin{lem}\label{infty-norm estimate}
    Under the same assumptions of \refl{q-norm estimate} with $q>n$, for any smooth positive function $h:X\to \mathbb{R_+}$, if $\varphi$ solves the following complex Monge-Amp\`ere equation 
    \begin{equation}
        \frac{1}{V_\omega}(\theta+\dd\varphi)^n=\frac{h}{A} e^F\omega_X^n,\,\,\, \sup_X\varphi=0,\nonumber
    \end{equation}
    where $A=\int_X h e^F\omega_X^n$, then there is a constant $C(X,\omega_X,n,q,K)>0$ such that
    \begin{equation}
        0\le -\varphi+ u_\theta\le C \left(\frac{\sup_X h}{A}\right)^{\frac{1}{q}}\,\,\,\text{on X},\nonumber
    \end{equation}
    where $u_\theta:=\sup\{u|u\in\mathrm{PSH}(X,\theta),u\le 0\}$.
\end{lem}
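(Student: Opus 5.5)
The lower bound is immediate: $\varphi\in\mathrm{PSH}(X,\theta)$ and $\varphi\le 0$, so $\varphi$ is a competitor in the definition of $u_\theta$. Hence $\varphi\le u_\theta$, i.e. $-\varphi+u_\theta\ge0$. The content is the upper bound $u_\theta-\varphi\le C S^{1/q}$, where I write $S:=\sup_X h/A$.

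The first point is that the Monge--Amp\`ere measure $\mu:=V_\omega^{-1}(\theta+\dd\varphi)^n$ is dominated by $S e^F\omega_X^n$. Both $\varphi$ and $u_\theta$ lie in $\mathrm{PSH}(X,\theta)\subseteq\mathrm{PSH}(X,\omega_X)$ with $\sup\le 0$. I would set $\Omega_s:=\{\varphi<u_\theta-s\}$ for $s>0$. Applying \refl{q-norm estimate} to $\varphi$, which has $\sup_X\varphi=0$, and noting that $u_\theta-\varphi\le -\varphi$ because $u_\theta\le 0$, Chebyshev gives
\begin{equation*}
\mu(\Omega_s)\le S\int_{\Omega_s}e^F\omega_X^n\le S\,s^{-q}\int_X(-\varphi)^qe^F\omega_X^n\le C S s^{-q}.
\end{equation*}
This identifies the natural scale $s_0:=S^{1/q}$. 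Note that $\int_X\mu=1$, so $S\ge 1/\int_X e^F\omega_X^n=1$. Beyond $s_0$, the mass of the sublevel sets is controlled by a negative power of $s/s_0$.

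The second step is a De Giorgi/Ko{\l}odziej-type iteration starting at level $s_0$. I would use the comparison inequality of Eyssidieux--Guedj--Zeriahi, adapted to the envelope $u_\theta$ (which is a maximal competitor of minimal singularities):
\begin{equation*}
t^n\,\mathrm{Cap}_\theta(\Omega_{s+t})\le \mu(\Omega_s),\qquad 0<t\le 1.
\end{equation*}
This would be combined with a capacity domination of the measure $e^F\omega_X^n$. For a Borel set $E$, apply the Young-inequality argument of \refl{q-norm estimate} to the relative extremal function of $E$; this should give
\begin{equation*}
\int_E e^F\omega_X^n\le C\bigl(1+|\log \mathrm{Cap}_\theta(E)|\bigr)^{-q}.
\end{equation*}
Since $q>n$, the resulting recursion for $g(s):=\mathrm{Cap}_\theta(\Omega_s)^{1/n}$ is
\begin{equation*}
t\,g(s+t)\le (CS)^{1/n}\,g(s)\,(-\log g(s))^{-q/n}.
\end{equation*}
Because $q/n>1$, the standard Ko{\l}odziej stopping lemma makes $g$ vanish after finite additional time. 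This yields $u_\theta-\varphi\le s_0+C'$ on $X$.

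The main obstacle is getting the sharp scaling $S^{1/q}$ rather than an additive constant or $S^{1/n}$. As written, the capacity recursion carries a factor $S^{1/n}$ in the increments. To fix this I would rescale before iterating. If that rescaling proves awkward, I would instead replace the capacity step by the Guo--Phong--Tong auxiliary equation: solve
\begin{equation*}
V_\omega^{-1}(\theta+\dd\psi_s)^n=\frac{(u_\theta-\varphi-s)_+}{A_s}\,\frac hA e^F\omega_X^n,
\end{equation*}
compare $-\varepsilon(-\psi_s)^{n/(n+1)}$ with $u_\theta-\varphi-s$ via the maximum principle, and use \refl{q-norm estimate} on $\psi_s$ to get an energy decay $A_s\lesssim S\,(\ldots)$ at each level. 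This produces a De Giorgi lemma $\phi(s+t)\le C t^{-\gamma}\phi(s)^{1+\delta}$ for $\phi(s)=\int_{\Omega_s}\frac hAe^F\omega_X^n$, with $\delta>0$ coming from $q>n$. Its iteration gives a final bound proportional to $\phi(s_0)^{\delta/\gamma}$ beyond $s_0$. Checking that this exponent bookkeeping indeed gives a total of order $S^{1/q}$ is the step I expect to require the most care.
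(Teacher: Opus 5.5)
You have a genuine gap: neither of your two routes actually establishes the sharp factor $S^{1/q}$, and that factor is the whole content of the lemma.

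\textbf{The capacity route.} Your lower bound, the Chebyshev estimate $\mu(\Omega_s)\le CSs^{-q}$ and the observation $S\ge1$ are all correct. But $S\ge1$ already makes an additive constant harmless, since $s_0+C'\le(1+C')S^{1/q}$, so the difficulty is not where you locate it. The actual problem is that this route, as written, does not give $u_\theta-\varphi\le s_0+C'$.

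First, the domination you state, $\int_Ee^F\omega_X^n\le C(1+|\log\mathrm{Cap}_\theta(E)|)^{-q}$, is the bound the extremal-function argument gives for the Alexander--Taylor capacity $e^{-M_E}$, where $M_E:=\sup_XV^*_{E,\theta}$. It does not imply your recursion, because it produces no factor $g(s)$. What \refl{q-norm estimate} actually yields is $\int_Ee^F\omega_X^n\le CM_E^{-q}$. Testing the capacity (normalized by $V_\omega^{-1}$) on $(1-M_E^{-1})u_\theta+M_E^{-1}V^*_{E,\theta}-1$ gives $\mathrm{Cap}_\theta(E)\ge M_E^{-n}$. Hence $\int_Ee^F\omega_X^n\le C\,\mathrm{Cap}_\theta(E)^{q/n}$, and so $t\,g(s+t)\le(CS)^{1/n}g(s)^{q/n}$ for $0<t\le1$.

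Second, even with this corrected recursion, the stopping lemma can only start at a level $s_1$ with $g(s_1)^{q/n-1}\le\frac12(CS)^{-1/n}$, i.e.\ $\mathrm{Cap}_\theta(\Omega_{s_1})\lesssim S^{-n/(q-n)}$. The only smallness you have is $\mathrm{Cap}_\theta(\Omega_{s+1})\le\mu(\Omega_s)\le CSs^{-q}$. This forces $s_1\sim S^{1/(q-n)}$, so you end with $u_\theta-\varphi\le CS^{1/(q-n)}$, which is strictly weaker than the lemma.

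The missing idea is the rescaling you allude to. Use $\mathrm{Cap}_M(E):=\sup\{V_\omega^{-1}\int_E(\theta+\dd\psi)^n:\psi\in\mathrm{PSH}(X,\theta),\ u_\theta-M\le\psi\le u_\theta\}$ with $M=S^{1/q}$. Then:
\begin{itemize}
\item $(t/M)^n\mathrm{Cap}_M(\Omega_{s+t})\le\mu(\Omega_s)$ for $0<t\le M$;
\item $\int_Ee^F\omega_X^n\le CM^{-q}\mathrm{Cap}_M(E)^{q/n}$ whenever $\mathrm{Cap}_M(E)<1$.
\end{itemize}
In the variable $s/M$ the recursion no longer involves $S$. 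Chebyshev gives $\mathrm{Cap}_M(\Omega_{(\lambda+1)M})\le C\lambda^{-q}$, and the stopping lemma then yields $u_\theta-\varphi\le CS^{1/q}$. Carried out, this would be a legitimate pluripotential alternative to the paper's PDE argument, but it is not in your proposal.

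\textbf{The fallback route.} This is the paper's route (Guo--Phong--Tong auxiliary equation plus De Giorgi iteration), but you stop exactly at the step that carries the content. The paper normalizes $\sup_Xh=1$, which is allowed because the statement is invariant under $h\mapsto\lambda h$. It then works with $\mu_h=he^F\omega_X^n$, of total mass $A$, and applies \refl{q-norm estimate} to $\psi_s$ against $\mu_h$ using $h\le1$. The maximum principle and H\"older give $B_s^{n/(n+1)}\le CA^{-1}\mu_h(\Omega_s)^{1-\frac{n}{q(n+1)}}$, hence
\begin{equation*}
r\,\mu_h(\Omega_{s+r})\le AB_s\le C_0A^{-1/n}\mu_h(\Omega_s)^{1+\frac1n-\frac1q}.
\end{equation*}
(The exponent printed in the paper has a sign typo; $1+\frac1n-\frac1q$ is what the choice of $r_i$ requires.)

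Iterating from $s=0$ with $r_i=2C_0A^{-1/n}\mu_h(\Omega_{i-1})^{\frac1n-\frac1q}$ halves $\mu_h$ at each step. The trivial bound $\mu_h(\Omega_0)\le A$ then turns $\sum_ir_i$ into $CA^{-1/n}A^{\frac1n-\frac1q}=CA^{-1/q}=C(\sup_Xh/A)^{1/q}$. In your normalization $\phi=\mu_h/A$, this reads $r\,\phi(s+r)\le C_0S^{1/q}\phi(s)^{1+\frac1n-\frac1q}$ with $\phi(0)\le1$. So the factor $S^{1/q}$ sits in the De Giorgi constant, with $\gamma=1$, and neither a starting level $s_0$ nor a Chebyshev step is needed.

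Note also that, since $\theta$ need not be positive, your comparison function must be built from $-\psi_s+u_\theta$, not from $-\psi_s$. The paper uses smooth approximants $u_\beta\to u_\theta$ and the shift $1+\Lambda$ so that the leftover term $\mathrm{tr}_{\omega_\varphi}(\theta+\dd u_\beta)$ has a nonnegative coefficient.
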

The proof makes use of method of auxiliary complex Monge-Amp\`ere equation in \cite{GPT,GPTW}.
\begin{proof}
    Firstly we may assume $\sup_X h=1$. Thanks to Berman's result \cite[Theorem1.1]{B}, we fix a sequence of smooth function $u_\beta\in\mathrm{PSH}(X,\theta)$,$\beta\in\mathbb{N}$, that $u_\beta$ converges uniformly to $ u_\theta$ as $\beta\to\infty$. We also choose a sequence of positive functions $\tau_k:\mathbb{R}\to\mathbb{R}_+$ that $\tau_k(x)$ decreases to $x\cdot\chi_{\mathbb{R}_+}(x)$ as $k\to\infty$. By Yau's theorem \cite{Y}, for any $s>0$, we solve the following auxiliary Monge-Amp\`ere equation on $X$
    \begin{equation*}
        (\theta+\dd\psi_{s,k,\beta})^n=\frac{\tau_k(-\varphi+u_\beta-s)}{B_{s,k,\beta}}(\theta+\dd\varphi)^n, \sup_X \psi_{s,k,\beta}=0,
    \end{equation*}
    where
    \[B_{s,k,\beta}=\frac{1}{V_\omega}\int_X\tau_k(-\varphi+u_\beta-s)(\theta+\dd\varphi)^n=\frac{1}{A}\int_X\tau_k(-\varphi+u_\beta-s)h e^F\omega_X^n.\]
    Since $\psi_{s,k,\beta}\le u_\theta$ and $u_\beta$ converges uniformly to $ u_\theta$, by taking $\beta$ large enough, we may assume $\psi_{s,k,\beta}\le u_\beta+1$.\\
    Define a function 
    \[\Phi=-\varepsilon(-\psi_{s,k,\beta}+u_\beta+1+\Lambda)^{\frac{n}{n+1}}-(\varphi-u_\beta+s)\]
    with the constants
    \begin{equation*}
        \varepsilon^{n+1}=B_{s,k,\beta}n^{-n}(n+1)^n, \Lambda=n^{n+1}(n+1)^{-n-1}\varepsilon^{n+1}.
    \end{equation*}
    As a smooth function on the compact manifold $X$, $\Phi$ must achieve its maximum at some point $x_0\in X$. We calculate ($\Delta_{\varphi}$ denotes the Laplacian with respect to the metric $\omega_\varphi=\theta+\dd\varphi$)
    \begin{align*}
        0\ge&\Delta_\varphi\Phi(x_0)\\
        =&-\frac{n\varepsilon}{n+1}(-\psi_{s,k,\beta}+u_\beta+1+\Lambda)^{-\frac{1}{n+1}}\mathrm{tr}_{\omega_\varphi}(-\dd\psi_{s,k,\beta}+\dd u_\beta)\\
        &-\mathrm{tr}_{\omega_\varphi}(\dd\varphi-\dd u_\beta)\\
        &+\frac{n\varepsilon}{(n+1)^2}(-\psi_{s,k,\beta}+u_\beta+1+\Lambda)^{-\frac{n+2}{n+1}}\mathrm{tr}_{\omega_\varphi}\sqrt{-1}\partial(\psi_{s,k,\beta}-u_\beta)\wedge\bar{\partial}(\psi_{s,k,\beta}-u_\beta)\\
        \ge&\frac{n\varepsilon}{n+1}(-\psi_{s,k,\beta}+u_\beta+1+\Lambda)^{-\frac{1}{n+1}}\mathrm{tr}_{\omega_\varphi}((\theta+\dd\psi_{s,k,\beta})-(\theta+\dd u_\beta))-n\\
        &+\mathrm{tr}_{\omega_\varphi}(\theta+\dd u_\beta)\\
        \ge&\frac{n\varepsilon}{n+1}(-\psi_{s,k,\beta}+u_\beta+1+\Lambda)^{-\frac{1}{n+1}}n\left(\frac{(\theta+\dd\psi_{s,k,\beta})^n}{\omega_\varphi^n}\right)^{1/n}-n\\
        &+(1-\frac{n\varepsilon}{n+1}(-\psi_{s,k,\beta}+u_\beta+1+\Lambda)^{-\frac{1}{n+1}})\mathrm{tr}_{\omega_\varphi}(\theta+\dd u_\beta)\\
        \ge&\frac{n^2\varepsilon}{n+1}(-\psi_{s,k,\beta}+u_\beta+1+\Lambda)^{-\frac{1}{n+1}}(\tau_k(-\varphi+u_\beta-s)B^{-1}_{s,k,\beta})^{1/n}-n\\
        &+(1-\frac{n\varepsilon}{n+1}\Lambda^{-\frac{1}{n+1}})\mathrm{tr}_{\omega_\varphi}(\theta+\dd u_\beta)\\
        \ge&\frac{n^2\varepsilon}{n+1}(-\psi_{s,k,\beta}+u_\beta+1+\Lambda)^{-\frac{1}{n+1}}(\tau_k(-\varphi+u_\beta-s)B^{-1}_{s,k,\beta})^{1/n}-n
    \end{align*}
    Therefore, at $x_0$,
    \begin{align*}
        -\varphi+u_\beta-s&\le\tau_k(-\varphi+u_\beta-s)\\
        &\le\left(\frac{n+1}{n\varepsilon}\right)^n B_{s,k,\beta}(-\psi_{s,k,\beta}+u_\beta+1+\Lambda)^{\frac{n}{n+1}}\\
        &=\varepsilon(-\psi_{s,k,\beta}+u_\beta+1+\Lambda)^{\frac{n}{n+1}},
    \end{align*}
    i.e. $\Phi(x_0)\le0$. We have
    \[-\varphi+ u_\theta-s\le-\varphi+u_\beta-s+\epsilon_\beta\le\varepsilon(-\psi_{s,k,\beta}+u_\beta+1+\Lambda)^{\frac{n}{n+1}}+\epsilon_\beta\]
    where $\epsilon_\beta\to0$ as $\beta\to\infty$.\\
    On $\Omega_s:=\{-\varphi+ u_\theta-s>0\}$ we have
    \begin{align*}
        (-\varphi+ u_\theta-s)^{\frac{n+1}{n}}\le C_n B_{s,k,\beta}^{1/n}(-\psi_{s,k,\beta}+u_\beta+1+B_{s,k,\beta})+\epsilon^{\frac{n+1}{n}}_\beta.
    \end{align*}
    Since by definition $ u_\theta\le0$, we have $u_\beta\le\epsilon_\beta$. Applying \refl{q-norm estimate} to $\psi_{s,k,\beta}$, we obtain that there exists $C=C(X,\omega_X,n,q,K)$ such that 
    \begin{align*}
        \int_{\Omega_s}\left(\frac{(-\varphi+ u_\theta-s)^{1+1/n}}{B_{s,k,\beta}^{1/n}}\right)^q he^F\omega_X^n&\le C\left(\int_{\Omega_s}(-\psi_{s,k,\beta})^q he^F\omega_X^n+\epsilon_\beta^q+B_{s,k,\beta}^q \mu_h(\Omega_s)+\frac{\epsilon_\beta^{q+q/n}}{B^{q/n}_{s,k,\beta}}\right)\\
        &\le C\left(1+B_{s,k,\beta}^q\mu_h(\Omega_s)+\frac{\epsilon_\beta^{q+q/n}}{B^{q/n}_{s,k,\beta}}\right),
    \end{align*}
    where $\mu_h(\Omega_s)=\int_{\Omega_s}he^F\omega_X^n$. 
    
    Letting firstly $\beta\to\infty$ and then $k\to\infty$, we have 
    \begin{equation}\label{q-norm with Bs}
        \int_{\Omega_s}\left(\frac{(-\varphi+ u_\theta-s)^{1+1/n}}{B_{s}^{1/n}}\right)^q he^F\omega_X^n \le C(1+B_s^q\mu_h(\Omega_s)).
    \end{equation}
    where $B_s=\frac{1}{V_\omega}\int_{\Omega_s}(-\varphi+ u_\theta-s)(\theta+\dd\varphi)^n=\frac{1}{A}\int_{\Omega_s}(-\varphi+ u_\theta-s)h e^F\omega_X^n$. The second part of \refe{q-norm with Bs} is bounded by H\"older inequality:
    \begin{align*}
        B_s&=\frac{1}{A}\int_{\Omega_s}(-\varphi+ u_\theta-s)he^F\omega_X^n=\frac{\mu_h(\Omega_s)}{A}\frac{1}{\mu_(\Omega_s)}\int_{\Omega_s}(-\varphi+ u_\theta-s)he^F\omega_X^n\\
        &\le \left(\frac{1}{\mu_h(\Omega_s)}\int_{\Omega_s}(-\varphi)^q he^F\omega_X^n\right)^{1/q}\le C\frac{1}{(\mu_h(\Omega_s))^{1/q}}.
    \end{align*} 
    From the definition of $B_s$, it follows from H\"older inequality that
    \begin{align*}
        B_s^{\frac{n}{n+1}}&=\frac{1}{A}\int_{\Omega_s}\frac{(-\varphi+ u_\theta-s)}{B_s^{\frac{1}{n+1}}}he^F\omega_X^n\\
        &\le\frac{\mu_h(\Omega_s)}{A}\left(\frac{1}{\mu_h(\Omega_s)}\int_{\Omega_s}\frac{(-\varphi+ u_\theta-s)^{q(1+1/n)}}{B_s^{q/n}}he^F\omega_X^n\right)^{\frac{n}{q(n+1)}}\\
        &\le C\frac{1}{A}\mu_h(\Omega_s)^{1-\frac{n}{q(n+1)}}.
    \end{align*}
    We then get 
    \begin{equation}\label{iteration}
        r\mu_h(\Omega_{s+r})\le A\cdot B_s\le C_0\frac{1}{A^{1/n}}\mu_h(\Omega_s)^{1-\frac{1}{n}-\frac{1}{q}}.
    \end{equation}
    By a classic iteration of De Giorgi, we choose sequences $\{s_i\}_{i=0}^{\infty}, \{r_i\}_{i=0}^{\infty}$ and set $\Omega_i:=\Omega_{s_i}$ satisfying
    \[s_0=r_0=0, s_i=s_{i-1}+r_i, r_i=2C_0\frac{1}{A^{1/n}}\mu_h(\Omega_{i-1})^{1/n-1/q}\] 
    for $i\ge 1$. According to \refe{iteration}, $\mu_h(\Omega_i)\le\frac{1}{2}\mu_h(\Omega_{i-1})$. Let $s_\infty=\lim_{i\to\infty}s_i$, then 
    $\mu_h(\Omega_\infty)=0$ and 
    \begin{align*}
        s_\infty&=\sum_{i=1}^{\infty}r_i=\frac{2C_0}{A^{1/n}}\sum_{i=1}^{\infty}\mu_h(\Omega_{i-1})^{1/n-1/q}\\
        &\le\frac{2C_0}{A^{1/n}}\sum_{i=1}^{\infty}(\frac{1}{2})^{(i-1)(1/n-1/q)}\mu_h(\Omega_0)^{1/n-1/q}\le CA^{-1/q}.
    \end{align*}
    We have completed the proof of Lemma \ref{infty-norm estimate}.
\end{proof}

\subsection*{Step 2: Completion of proof of \reft{thm-green}}
Thanks to \cite{GPSS1} and \cite{GPSS2}, there exists $C=C(X,\omega_X,n,A,q,K)$ such that for any $x\in X$,
    \begin{equation}\label{inf and L1 estimate}
        -V_\omega\inf_{y\in X} G_\omega(x,y)+\int_X|G_\omega(x,\cdot)|\omega^n\le C.
    \end{equation}
    We write 
    \begin{equation}
        \mathcal{G}(x,\cdot)=G_\omega(x,\cdot)-\inf_{x,y\in X}G_\omega(x,y)+V_\omega^{-1}>0.
    \end{equation}
    By \refe{inf and L1 estimate} we have $\int_X \mathcal{G}(x,\cdot)\omega^n\le C$.
    We define $K_t=\{\mathcal{G}(x,\cdot)V_\omega>t\}$ for any $t>0$, then \refe{weak q-norm} is equivalent to 
    \begin{equation}
        \sup_{t>0}\{t^p\mu_\omega(K_t)\}\le C.
    \end{equation}
    Let $\{h_k\}_{k=1}^{\infty}$ be a sequence of smooth positive functions such that $\sup_X h_k\le 2$ and 
    \[\int_X |h_k-\chi_{K_t}|e^F\omega_X^n\to 0\]
    as $k\to\infty$. Clearly there holds
    \[A_k:=\int_X h_k e^F\omega_X^n\to \mu_\omega(K_t)\] as $k\to\infty$.
    According to \cite[Proposition 3.1]{GPSS1}, we can pick a smooth representative $\theta\in[\omega]$ such that $\Vert\theta\Vert_{L^\infty(X,\omega_X)}$ is uniformly bounded by some constant that only depends on $A$ and $\omega_X$. We consider the following auxiliary Monge-Amp\`ere equation on X
    \begin{equation*}
        \frac{1}{V_\omega}(\theta+\dd\varphi_k)^n=\frac{h_k}{A_k}e^F\omega_X^n, \,\,\,\sup_X\varphi_k=0.
    \end{equation*}
    We denote $\omega=\theta+\dd u$ with $\sup_X u=0$ and consider the function $v_k=\varphi_k-u$. It follows from \refl{infty-norm estimate} and $L^{\infty}$-estimate from \cite[Proposition 4.1]{GPSS1} that 
    \begin{equation}\label{infty-norm of vk}
        -CA_k^{-1/q}\le \varphi_k-u_\theta\le v_k\le u_\theta-u\le C.
    \end{equation}
    We calculate
    \begin{align*}
        \Delta_\omega v_k&=\mathrm{tr}_\omega(\theta+\dd\varphi_k)-n\\
        &\ge\left(\frac{(\theta+\dd\varphi_k)^n}{\omega^n}\right)^{1/n}-n\\
        &=\frac{h_k^{1/n}}{A_k^{1/n}}-n
    \end{align*}
    and then apply the Green's formula to $v_k$ at $x\in X$ to see
    \begin{align*}
        v_k(x)&=\frac{1}{V_\omega}\int_X v_k\omega^n+\int_X\mathcal{G}(x,\cdot)(-\Delta_\omega v_k)\omega^n\\
        &\le\frac{1}{V_\omega}\int_X v_k\omega^n-\frac{1}{A_k^{1/n}}\int_X \mathcal{G}(x,\cdot)h_k^{1/n}\omega^n+n\int_X\mathcal{G}(x,\cdot)\omega^n\\
        &\le C-\frac{1}{A_k^{1/n}}\int_X \mathcal{G}(x,\cdot)h_k^{1/n}\omega^n
    \end{align*}
    Combining \refe{infty-norm of vk} and the fact that $A_k\le 2$, we have 
    \begin{equation}\label{Ak and G}
        A_k^{1/q-1/n}\int_X\mathcal{G}(x,\cdot)h_k^{1/n}\omega^n\le C.
    \end{equation}
    Since $\mathcal{G}(x,\cdot)$ is integrable and $h_k$ is bounded, applying the Dominated Convergence Theorem, we have
    $\int_X\mathcal{G}(x,\cdot)h_k^{1/n}\omega^n\to \int_{K_t}\mathcal{G}(x,\cdot)\omega^n$
    as $k\to\infty$. By the definition of $K_t$, we have
    $\int_{K_t}\mathcal{G}(x,\cdot)\omega^n\ge t\mu_\omega(K_t).$
    Letting $k\to\infty$ in \refe{Ak and G} concludes that 
    \[t\mu_\omega(K_t)^{\frac{nq-q+n}{nq}}\le C,\]
    where $C=C(X,\omega_X,n,A,q,K)$.
    
    Theorem \ref{thm-green} is proved.

    \begin{rem}[Optimality of geometric estimates]\label{rem-opt} We now explain that the Sobolev-type inequality, local volume non-collapsing, weak integrability of Green's function and on-diagonal upper bound on heat kernel obtained in this paper (see Theorems \ref{Sobolev inequality} and \ref{thm-green} and Corollary \ref{cor}) are optimal under $q$-Nash entropy condition.
    \begin{itemize}
    \item[(1)] Firstly, the local volume non-collapsing estimate in Theorem \ref{Sobolev inequality} (2) is optimal in the sense that for any (small) $\epsilon>0$, the corresponding estimate with exponent of $R$ replaced by $\frac{2nq}{q-n}-\epsilon$ is no longer true, thanks to \cite[Example 3.1]{GS}.
    \item[(2)] Given the above item (1), the exponent $p=\frac{2nq}{nq-q+n}$ in Sobolev inequality \eqref {optimal-sob} is also optimal in the sense that for any $\epsilon>0$, the corresponding Sobolev inequality with exponent $p=\frac{2nq}{nq-q+n}+\epsilon$ is no longer true, since, on a closed Riemannian manifold, a Sobolev inequality with exponent $p>2$ implies local volume non-collapsing with exponent $\frac{2p}{p-2}$ (see e.g. \cite{GPSS3,H,Z}). Elementarily note that for $p>\frac{2nq}{nq-q+n}$, we have $\frac{2p}{p-2}<\frac{2nq}{q-n}$.
    \item[(3)] For Green's function estuimate, we observe that the conclusion in Theorem \ref{thm-green} is also optimal as a uniform $L^{\frac{nq}{nq-q+n}+\epsilon,\infty}$ estimate for Green' function must fail, thanks to the above item (2) and Theorem \ref{thm-green-sob}.
    \item[(4)] Finally, the on-diagonal upper bound of heat kernel in Corollary \ref{cor} (a) is also optimal in the sense that for any (small) $\epsilon>0$, the corresponding estimate with exponent of $t$ replaced by $\frac{nq}{q-n}-\epsilon$ is no longer true. To see this, let's recall a classical result (see e.g. \cite[Theorem 4.2.1]{Z}) that for a fixed number $s>1$, an on-diagonal upper bound of heat kernel $V_\omega H_\omega(x,x,t)\le \frac{C}{t^s}$ for any $t>0$ implies a Sobolev-type inequality with exponent $\frac{2s}{s-1}$. Actually, if one has $V_\omega H_\omega(x,x,t)\le \frac{C}{t^s}$, then $V_\omega H_\omega(x,y,t)\le \frac{C}{t^s}$ holds for any $x,y\in X$ and $t>0$, and hence we can apply \cite[Theorem 4.2.1, (III)$\Rightarrow$(I) ]{Z} to conclude the Sobolev-type inequality with exponent $\frac{2s}{s-1}$. Elementarily note that for $s<\frac{nq}{q-n}$, we have $\frac{2s}{s-1}>\frac{2nq}{nq-q+n}$.
    \end{itemize}
    \end{rem}

     \section*{Acknowledgments}
We are grateful to Zhenlei Zhang and Bin Zhou for helpful discussions. This work is partially supported by National Natural Science Foundation of China (No. 12371057), Natural Science Foundation of Hunan Province (No. 2024JJ2006) and The Science and Technology Innovation Program of Hunan Province (No. 2023RC3096).


\begin{thebibliography}{99}

\bibitem{B}R.~J. Berman, From Monge-Amp\`ere equations to envelopes and geodesic rays in the zero temperature limit, Math. Z. {\bf 291} (2019), no. 1-2, 365-394

\bibitem{C} G. Carron, In\'egalit\'es isop\'erim\'etriques de Faber-Krahn et cons\'equences, Publications de l'lnstitut Fourier, 220, 1992

\bibitem{Ch} J. Cheeger, Degeneration of Riemannian Metrics under Ricci Curvature Bounds, Lezioni Fermiane (Fermi Lectures), Scuola Normale Superiore, Pisa, 2001

\bibitem{CL} S. Y. Cheng and P. Li, Heat kernel estimates and lower bound of eigenvalues, Comment. Math. Helv. 56 (1981), no. 3, 327-338

\bibitem{DNV} T.D. Do, D.-B. Nguyen and D.-V. Vu, Non-collapsing volume estimate for local K\"{a}hler metrics in big cohomology classes, Math. Z. 312 (2026), no. 4, Paper No. 113, 29 pp

\bibitem{Gra} L. Grafakos, Classical Fourier Analysis, Third Edition, GTM 249, Springer, New York, 2014

\bibitem{GGZ} V. Guedj, H. Guenancia and A. Zeriahi, Diameter of K\"{a}hler current, J. Reine Angew. Math. (Crelle’s Journal) 820 (2025), 115-152

\bibitem{GL} V. Guedj and C. H. Lu, Quasi-plurisubharmonic envelopes 1: uniform estimates on K\"{a}hler manifolds, J. Eur. Math. Soc. 27, 1185-1208 (2025)

\bibitem{GT} V. Guedj and T. D. T\^o, K\"{a}hler families of Green's functions, J. \'Ec. polytech. Math. 12 (2025), 319-339

\bibitem{GPS} B. Guo, D.H. Phong and J. Sturm, Green's functions and complex Monge-Amp\`ere equations, J. Differential Geom. {\bf 127} (2024), no. 3, 1083-1119

\bibitem{GPSS1} B. Guo, D.H. Phong, J. Song and J. Sturm. Diameter estimates in K\"{a}hler geometry, Comm. Pure Appl. Math. {\bf 77} (2024), no. 8, 3520-3556

\bibitem{GPSS2} B. Guo, D.H. Phong, J. Song and J. Sturm. Diameter estimates in K\"{a}hler geometry II: removing the small degeneracy assumption, Math. Z. 308, 43 (2024)

\bibitem{GPSS3} B. Guo, D.H. Phong, J. Song and J. Sturm. Sobolev inequalities on K\"{a}hler spaces, arXiv:2311.00221

\bibitem{GPT} B. Guo, D.H. Phong and F. Tong, On $L^\infty$ estimates for complex Monge-Amp\`ere equations, Ann. Math. (2) 198 (2023), no. 1, 393-418

\bibitem{GPTW} B. Guo, D.H. Phong, F. Tong and C. Wang, On $L^\infty$ estimates for Monge-Amp\`ere and Hessian equations on nef classes, Anal. PDE 17 (2024), no. 2, 749-756

\bibitem{GS} B. Guo and J. Song, Local noncollapsing for complex Monge-Amp\`ere equations, J. Reine Angew. Math. {\bf 793} (2022), 225-238

\bibitem{H} E. Hebey, Nonlinear analysis on manifolds: Sobolev spaces and inequalities, Courant Lecture Notes in Mathematics, 5, New York Univ., Courant Inst. Math. Sci., New York, 1999 Amer. Math. Soc., Providence, RI, 1999

\bibitem{Le} N.Q. Le, Analysis of Monge-Amp\`ere equations, Grad. Stud. Math. 240, American Mathematical Society, Providence, RI, 2024

\bibitem{Li} P. Li, Geometric Analysis, Cambridge Stud. Adv. Math., 134, Cambridge University Press, Cambridge, 2012

\bibitem{Liuj} J. Liu, On relative $L^\infty$ estimate for complex Monge-Amp\`ere equation, arXiv:2410.04393 (2024)

\bibitem{NV} D.-B. Nguyen and D.-T. Vu, Uniform diameter estimates for K\"ahler metrics in big cohomology classes, arXiv:2410.18532

\bibitem{T} G. Tian, On K\"ahler-Einstein metrics on certain K\"ahler manifolds with $C_1(M)>0$, Invent. Math. {\bf 89} (1987), no. 2, 225-246

\bibitem{TW} G.J. Tian and X.J. Wang, A class of Sobolev type inequalities, Methods Appl. Anal. {\bf 15} (2008), no. 2, 263-276

\bibitem{V} D.-V. Vu, Uniform diameter and non-collapsing estimates for K\"{a}hler metrics, J. Geom. Anal. 36 (2026), no. 2, Paper No. 75, 34 pp

\bibitem{WZ} J. Wang and B. Zhou, Sobolev inequalities and regularity of the linearized complex Monge-Amp\`ere and Hessian equations, Trans. Amer. Math. Soc. {\bf 378} (2025), no. 1, 447-475

\bibitem{Y}S.-T. Yau, On the Ricci curvature of a compact K\"{a}hler manifold and the complex Monge-Amp\`{e}re equation, I, Comm. Pure Appl. Math. 31 (1978), 339-411

\bibitem{ZlZzl} L. Zhang and Z.L. Zhang, Complex Monge-Amp\`ere equation in Orlicz space and diameter bound, arXiv:2601.09893

\bibitem{Z}Q.S. Zhang, Sobolev inequalities, heat kernels under Ricci flow and the Poincar\'e conjecture, CRC-Press, Boca Raton, FL, 2011

\bibitem{ZwZy} W. Zhang and Y.S. Zhang, Green's function estimates for compact K\"{a}hler manifolds and applications, arXiv:2508.13646v2
\end{thebibliography}
\end{document}